\newtheorem{thm}{Theorem}[section]
\newtheorem{prop}[thm]{Proposition}
\newtheorem{lem}[thm]{Lemma}
\newtheorem{defn}{Definition}[section]
\newtheorem{assmp}{Assumption}[section]
\newtheorem{remark}{Remark}[section]
\newtheorem{eg}{Example}[section]
\numberwithin{equation}{section} 
\renewcommand{\geq}{\geqslant}
\renewcommand{\leq}{\leqslant}
\newcommand{\citethm}[1]{Theorem \ref{#1}}
\newcommand{\citeprop}[1]{Proposition \ref{#1}}
\newcommand{\citelem}[1]{Lemma \ref{#1}}
\newcommand{\citeassmp}[1]{Assumption \ref{#1}}
\newcommand{\citeremark}[1]{Remark \ref{#1}}
\newcommand{\opfont}{\mathbb}
\newcommand{\BE}[2][]{\ensuremath{\operatorname{\opfont{E}}_{#1}\!\left[#2\right]}}
\newcommand{\bp}{\ensuremath{\opfont{P}}}
\newcommand{\BF}{\ensuremath{\mathcal{F}}}
\newcommand{\R}{\ensuremath{\operatorname{\mathbb{R}}}}
\newcommand{\dd}{\ensuremath{\operatorname{d}\! }}
\newcommand{\dt}{\ensuremath{\operatorname{d}\! t}}
\newcommand{\ds}{\ensuremath{\operatorname{d}\! s}}
\newcommand{\du}{\ensuremath{\operatorname{d}\! u}}
\newcommand{\dx}{\ensuremath{\operatorname{d}\! x}}
\newcommand{\dy}{\ensuremath{\operatorname{d}\! y}}
\newcommand{\dz}{\ensuremath{\operatorname{d}\! z}}
\newcommand{\ddp}{\ensuremath{\operatorname{d}\! p}}
\newcommand{\sets}{\mathcal{S}}
\newcommand{\nn}{\nonumber}
\begin{document}
\title{
$g$-Expectation of Distributions\thanks{The authors thank two anonymous referees for their comments.}}
\author{Mingyu Xu\thanks{School of Mathematical Sciences, Fudan University, Shanghai, P.R. China; xumy@fudan.edu.cn.},
\and Zuo Quan Xu\thanks{Department of Applied Mathematics, The Hong Kong Polytechnic University, Kowloon, Hong Kong, China; maxu@polyu.edu.hk. This author acknowledges financial support from the NSFC (No.11971409), The Hong Kong RGC (GRF No.15202421), The PolyU-SDU Joint Research Center on Financial Mathematics, The CAS AMSS-POLYU Joint Laboratory of Applied Mathematics, and The Hong Kong Polytechnic University.},
\and Xun Yu Zhou\thanks{Department of Industrial Engineering and Operations Research \&
The Data Science Institute,
Columbia University,
New York, NY 10027, USA; xz2574@columbia.edu. This author acknowledges financial support through a start-up grant and the Nie Center for Intelligent Asset Management at Columbia University.}}
\date{\today}
\maketitle

\begin{abstract}
We define $g$-expectation of a distribution as the infimum of the $g$-expectations of all the terminal random variables sharing that distribution. We present two special cases for nonlinear $g$ where the $g$-expectation of distributions can be
explicitly derived. As a related problem, we introduce the notion of law-invariant $g$-expectation and provide its sufficient conditions. Examples of application in financial dynamic portfolio choice are supplied.

\bigskip

\noindent
\textbf{Keywords: } BSDE, $g$-expectation, probability distribution, cost efficiency, law-invariance, portfolio selection.
\end{abstract}

\section{Introduction}

Backward stochastic differential equation (BSDE) is inherently a ``sample-wise" notion, by which we mean that the terminal value of a BSDE is a {\it random variable} (as opposed to a {\it distribution}) and its solution is in terms of {\it sample paths}
(as opposed to {\it distributions} in the space of continuous functions). This is natural from the very origin of BSDE -- the adjoint equation in the maximum principle for stochastic control whose terminal value is the gradient of the terminal reward function evaluated at the optimal terminal state \cite{B73}. This sample-wise property is vital for applying BSDE to financial derivative pricing.
This is because a BSDE is employed to describe a wealth process that must replicate the final payoff of a derivative security in almost {\it all} states of nature in order to avoid arbitrage.

However, there are applications in which one needs to ``duplicate" only a probability distribution at the end. For example, in a dynamic Merton problem with the objective of maximizing the expected utility of terminal wealth, a common approach, called the martingale method, is to first find the optimal terminal wealth (which is a random variable) and then replicate that wealth via a BSDE. However, as the expected utility is {\it law-invariant}, we do not really care about which particular terminal random variable to be replicated; all we need is to find the optimal terminal {\it distribution} and then replicate a random variable that follows that distribution.

Generalizing this idea to problems where people are concerned with only the distribution of the final state of a stochastic dynamic system motivates us to formulate and study BSDEs with terminal distributions. However, there is an issue of non-uniqueness: given a distribution there may be (infinitely) many associated random variables leading to different solutions and in particular different initial values of the classical BSDEs (called the {\it g-expectations} introduced by Peng \cite{P97}).
A natural way to address this issue is to find the one that gives the {\it minimum} initial value and use that value to define the $g$-expectation of the given {\it distribution}. This idea has actually been around in the finance literature related to {\it linear} BSDEs.
An investment strategy as well as its terminal payoff are called {\it cost efficient} if any other strategy generating the same terminal distribution costs at least as much. In an arbitrage-free complete market setting, Dybvig \cite{D98a,D98b} characterizes the cost-efficient payoffs for investors with increasing and law-invariant preferences. 
Bernard et al. \cite{BBV14} study cost-efficient strategies subject to state-dependent constraints. Wang et al. \cite{WXXY20} identify cost-efficient strategies under multi stochastic dominance constraints.
On the other hand, cost-efficient strategies play a vital role in solving continuous-time portfolio selection problems with non-expected utility preferences especially in behavioral finance settings. The essential idea is that one needs to only consider cost-efficient strategies as candidates for optimal strategies if the risk preference is increasing and law-invariant (which is the case with most preference measures including the behavioral ones; see \cite{HZ11} for many examples). When the wealth equation or the corresponding BSDE is linear, it follows from the celebrated Hardy--Littlewood lemma that cost-efficient strategies must be anti-comonotonic with the pricing kernel and thus have a specific form. This special structure enables one to solve the optimization problem by considering the quantile function of the terminal wealth as the decision variable. This approach, coined as the ``quantile formulation" by \cite{HZ11}, was taken in
Jin and Zhou \cite{JZ08} to solve for the first time a behavioral portfolio selection problem with probability distortion and S-shaped utility functions. It is subsequently employed to solve various non-expected utility portfolio selection problems, such as in \cite{HZ11,X14,XZ16,X16,MX21}.

All the above works in the financial application context have a crucial assumption that the wealth equation is linear (or equivalently the market pricing rule is linear) upon which the Hardy--Littlewood lemma is premised.
In practice, however, the wealth equation may be nonlinear. For instance, if the deposit and loan rates in a market are different, then the resulting wealth equation becomes nonlinear. Indeed, Pardoux and Peng \cite{PP90} use this example to motivate the introduction of {\it nonlinear} BSDEs. The objective of this paper is to
formulate and study the problem of finding the smallest nonlinear $g$-expectation value among terminal random variables following a same given distribution. To our best knowledge, this paper is the first to do so. In addition to its own theoretical interest, solving this problem will in turn help us solve non-expected utility maximization problems (including those in behavioral finance) in general ``nonlinear markets" with nonlinear pricing rules.
%
%

In this paper, we first formulate the problem and define the $g$-expectation of distributions. Then we present two special cases of the driver $g$ where the problem can be solved explicitly, including the benchmark case corresponding to the aforementioned market model where the deposit and loan rates are different.
Finally, we introduce the related notion of law-invariant $g$-expectation, and present a large class of such law-invariant drivers by a partial differential equation (PDE) argument. We also give several financial examples to demonstrate the theoretical results.

The rest of the paper is organized as follows. In Section 2, we formulate the problem and provide its solution in the linear BSDE case. Section 3 is devoted to the study of two special nonlinear cases. In Section 4, we introduce law-invariant $g$-expectation and give sufficient conditions for law-invariant drivers. We conclude in Section 5.

\section{Problem formulation}
Throughout this paper, a filtered probability space $(\Omega,{\cal F},\bp; \{{\cal F}_t\}_{t\geq0})$ is given satisfying the usual assumptions, along with a standard $n$-dimensional Brownian motion $B=\{B_t,\;t\geq0\}$. We assume
$\{{\cal F}_t\}_{t\geq0}$ is generated by $B$ complemented by all the $\bp$-null sets, and ${\cal F}={\cal F}_T$ where $T>0$ is a given maturity date. All the random variables considered in this paper are $\BF_{T}$-measurable unless otherwise specified.

We will usually suppress $\omega\in\Omega$ for a random variable or a stochastic process. Moreover,
we use the following notations throughout the paper: \\[15pt]
\begin{tabular}{rl}
$Z'$ : & the transpose of any matrix or vector $Z$; \\
$\Vert Z\Vert$ : & $=\sqrt{Z'Z}$ for any column vector $Z$; \\
$\R^n$ : & the $n$-dimensional real Euclidean space; \\
$\mathbf{1}$ : & the unit (column) vector of $\R^{n}$ with all entries being 1;\\
$x^{-}$ : & $=\max\{-x,0\}$, the negative part of any number $x\in\R$; \\
$F_{X}(\cdot)$ : & the probability distribution function of any random variable $X$; \\
$F_{X}^{-1}(\cdot)$ : & the (right-continuous) inverse of $F_{X}(\cdot)$, also called the quantile\\
& (function) of $X$;\\
$\Phi(\cdot)$: & the distribution function of a standard normal random variable;\\
$X\sim \mu$ : & $X$ is a random variable following the distribution $\mu$;\\
$L^{p}(\BF_{t})$ : & the set of $\BF_{t}$-measurable random variables $X$ such that $\BE{|X|^p}<\infty$;\\
$L^{p}_{\BF}([0,T])$ : & the set of $\{{\cal F}_t\}_{t\geq0}$-adapted processes $\{X_t,\;t\geq 0\}$ such \\
& that $\BE{\int_0^T|X_t|^p\dt}<\infty$;\\
$\BE{f(\mu)}$ : &$=\int_{\R}f(x)\dd\mu(x)$ for a probability distribution function $\mu$ and \\
&function $f$, provided the integral is defined;\\
$D_{t}X$ : & the Malliavin derivative of any random variable $X$ for $t\in [0,T]$.
\end{tabular}\\[15pt]
We call a probability distribution function $\mu$ square-integrable if $\BE{\mu^2}<\infty$. Finally, sometimes we denote a stochastic process $X=\{X_t,\;t\geq0\}$ simply by $X$. 

In this paper, we call $g$ a {\it driver}
if $g: [0,T] \times \Omega\times \R\times \R^{n}\mapsto \R$ is $\{{\cal F}_t\}_{t\geq0}$-adapted and satisfies Lipschitz condition in $y$ and $z$, uniformly in $(t,\omega )$, and $g(t,0,0)\in L^{2}(\BF_{t})$ for $t\in[0,T]$.
Given a driver $g$ and a random variable $X\in L^{2}(\BF_{T})$, the following backward stochastic differential equation (BSDE)
\begin{align}\label{bsde1}
\begin{cases}
\dd Y_{t}=g(t,Y_{t},Z_{t})\dt+Z'_{t}\dd B_{t},\\
Y_{T}=X
\end{cases}
\end{align}
admits a unique square-integrable, adapted solution $(Y,Z)$ by the classical BSDE theory (see Pardoux and Peng \cite{PP90}). We call $Y_{0}$ the {\it $g$-expectation} of $X$, denoted by $\BE[g]{X}$. In particular, if $g$ is identical to zero, then $\BE[0]{X}=\BE{X}$ as in such a case $Y$ is a martingale.
\begin{remark}
One can also consider more general drivers for BSDEs and/or weaker conditions for the terminal value $X$, such as in space $L^{p}(\BF_{T})$ with $1\leq p<2$ (see \cite{BDHPS03}). However, if $X\in L^{1}(\BF_{T})$ for instance, then the existence and uniqueness of solutions to BSDE \eqref{bsde1} require more technical assumptions on the driver $g$. We are not pursuing that direction in this paper.
\end{remark}
\begin{defn}[Reachable distribution]
A distribution $\mu$ is called reachable (or replicable) under a driver $g$ if there exists $(X, Y,Z)$ satisfying \eqref{bsde1} with $X\sim\mu$. In this case, we say that $\mu$ is reached (or replicated) under the driver $g$.
\end{defn}
By the classical BSDE theory, every square-integrable distribution is reachable under any driver $g$.
Yet,
for a reachable distribution $\mu$, there are typically many processes reaching it, because there can be many different random variables $X$ having the same distribution $\mu$ in the BSDE \eqref{bsde1}. Naturally, then,
we are interested in the one with the smallest initial value $\BE[g]{X}$. This is related to the notion of {\it cost efficiency} which dates back to Cox and Leland \cite{CL82} and Dybvig \cite{D98a,D98b};
hence the following definition.
\begin{defn}[Cost efficiency and $g$-expectation of distribution]
Let $\mu$ be a reachable distribution and $g$ be a given driver.
A random variable $X^{*}$ is called (cost) efficient if it is an optimal solution to the following problem
\begin{align}\label{p1}
\inf_{X\sim \mu }\;\BE[g]{X}, 
\end{align}
and the infimum value is called the $g$-expectation of $\mu$, denoted by
\begin{align}\label{gmu}
\BE[g]{\mu}=\inf_{X\sim \mu }\;\BE[g]{X}. 
\end{align}
\end{defn}

A main purpose of this paper is to identify classes of drivers 
for which the efficient problem \eqref{p1} can be solved, along with its financial applications.
\par
As is well known, \eqref{p1} can be completely solved when the driver $g$ is linear. In turn,
the results on the linear case can help solve some nonlinear cases.

For ease of exposition, throughout this paper we assume all the integrals involved exist.
\begin{lem}
\label{linear}\label{lem1}
Assume that $g(t,y,z)\equiv r_{t}y+\theta _{t}'z+\delta _{t}$ in \eqref{bsde1}, where $r,\theta$ and $\delta$ are adapted processes.
Then
\[Y_{t}=\rho _{t}^{-1}\BE{\rho_{T}X-\int_{t}^{T}\delta _{s}\rho _{s}\ds\;\bigg|\;\BF_{t}},\]
where
\begin{align}\label{rho}
\rho_{t}=\exp \left(-\int_{0}^{t}\left(r_{s}+\tfrac{1}{2}\Vert\theta_{s}\Vert^{2}\right)\ds-\int_{0}^{t}\theta' _{s}\dd B_{s}\right)
\end{align}
is the unique solution of the SDE
\[\dd\rho_{t}=-r_{t}\rho _{t}\dt-\rho _{t}\theta'_{t}\dd B_{t},\qquad \rho_{0}=1.\]
In particular,
\[\BE[g]{X}=Y_0=\BE{\rho_{T}X}-\int_{0}^{T}\BE{\delta _{s}\rho _{s}}\ds.\]
\end{lem}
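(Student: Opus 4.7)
The plan is to represent $Y_t$ as a conditional expectation by exhibiting an explicit martingale built from the BSDE solution and the ``pricing kernel'' $\rho_t$. The linear structure of the driver $g(t,y,z) = r_t y + \theta_t' z + \delta_t$ is designed precisely so that, after multiplying $Y$ by $\rho$, the $Y_t$ and $Z_t$ terms cancel out in the drift, leaving a deterministic-looking correction involving only $\delta$.

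First, I would quickly verify that the process $\rho_t$ given by the exponential formula \eqref{rho} indeed solves the stated linear SDE. This is a routine application of It\^o's formula to $\exp(\cdot)$, noting that the $-\tfrac{1}{2}\|\theta_s\|^2$ correction in the exponent cancels the It\^o quadratic variation term coming from $-\int_0^t \theta_s' \dd B_s$. Next, I would apply the It\^o product rule to the process $\rho_t Y_t$. Using
\[
\dd\rho_t = -r_t \rho_t \dt - \rho_t \theta_t' \dd B_t, \qquad \dd Y_t = (r_t Y_t + \theta_t' Z_t + \delta_t)\dt + Z_t' \dd B_t,
\]
and the cross variation $\dd\langle \rho, Y\rangle_t = -\rho_t \theta_t' Z_t \dt$, the drift of $\dd(\rho_t Y_t)$ collapses to $\rho_t \delta_t \dt$ after the $\pm r_t \rho_t Y_t$ and $\pm \rho_t \theta_t' Z_t$ terms cancel. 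Thus
\[
\dd(\rho_t Y_t) = \rho_t \delta_t \dt + \rho_t (Z_t - Y_t \theta_t)' \dd B_t,
\]
so that $M_t := \rho_t Y_t + \int_0^t \rho_s \delta_s \ds$ is a local martingale.

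The next step is to upgrade this to a true martingale so that $M_t = \BE{M_T \mid \BF_t}$. Under the standing Lipschitz assumption on $g$ and the square integrability of $g(t,0,0)$, the BSDE theory yields $(Y,Z) \in L^2_\BF([0,T])$; combined with the standing assumption in the paper that ``all integrals involved exist'', the stochastic integral $\int_0^\cdot \rho_s (Z_s - Y_s \theta_s)' \dd B_s$ is a genuine martingale. Taking the conditional expectation $\BE{\,\cdot \mid \BF_t}$ of $M_T - M_t$ and using $Y_T = X$ yields
\[
\rho_t Y_t + \int_0^t \rho_s \delta_s \ds = \BE{\rho_T X + \int_0^T \rho_s \delta_s \ds \;\Big|\; \BF_t},
\]
and rearranging (pulling $\int_0^t \rho_s \delta_s \ds$ out of the conditional expectation, since it is $\BF_t$-measurable) gives the claimed formula for $Y_t$ after dividing by $\rho_t$. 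The ``In particular'' statement then follows by specializing to $t = 0$, noting $\rho_0 = 1$, and applying Fubini to pull the expectation inside the $\ds$ integral.

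The main obstacle is the integrability justification needed to pass from local to true martingale; however, this is standard under the hypotheses already invoked (Lipschitz driver, square-integrable data, and the paper's blanket assumption that all relevant integrals are well-defined). Everything else is essentially bookkeeping with It\^o calculus.
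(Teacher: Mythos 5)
Your approach is exactly the one the paper intends---its entire proof is the remark ``apply It\^{o}'s formula to $\rho_{t}Y_{t}$''---and your drift/diffusion computation and the discussion of upgrading the local martingale to a true martingale are correct. The one flaw is a sign slip: since $\dd(\rho_{t}Y_{t})=\rho_{t}\delta_{t}\dt+\rho_{t}(Z_{t}-Y_{t}\theta_{t})'\dd B_{t}$, the local martingale is $M_{t}=\rho_{t}Y_{t}-\int_{0}^{t}\rho_{s}\delta_{s}\ds$, not $\rho_{t}Y_{t}+\int_{0}^{t}\rho_{s}\delta_{s}\ds$; as written, your rearrangement produces $\rho_{t}Y_{t}=\BE{\rho_{T}X+\int_{t}^{T}\rho_{s}\delta_{s}\ds\mid\BF_{t}}$, which carries the wrong sign on the $\delta$ term relative to the stated formula. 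With the sign corrected, everything goes through and the ``in particular'' claim follows as you describe.
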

\begin{proof} Straightforward from applying It\^{o}'s formula to $\rho_{t}Y_{t}$.
\end{proof}

\begin{lem}[Hardy--Littlewood inequality]\label{HL}
Under the assumption of Lemma \ref{lem1}, if $\eta$ is an atomless random variable\footnote{A random variable is called atomless if its probability distribution is a continuous function.} and
$\mu$ is a distribution, then
\begin{align*}
\inf_{X\sim \mu }\;\BE{\eta X}=\BE{\eta X^{*}}=\int_{0}^{1}F_{\eta}^{-1}(1-p)\mu^{-1}(p)\ddp,
\end{align*}
where the efficient $X^{*}=\mu^{-1}(1-F_{\eta}(\eta))$ is the unique random variable with distribution $\mu$ that is anti-comonotonic with $\eta$.
\end{lem}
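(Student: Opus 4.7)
The plan is to verify that $X^{*}$ is feasible and achieves the stated value, then establish the lower bound via a Fr\'echet-Hoeffding comparison, and finally deduce uniqueness from the equality case.

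Since $\eta$ is atomless, the probability integral transform gives $F_{\eta}(\eta)\sim U[0,1]$, so $U:=1-F_{\eta}(\eta)$ is uniform on $[0,1]$. Applying $\mu^{-1}$ to a uniform random variable produces one with distribution $\mu$, hence $X^{*}=\mu^{-1}(U)\sim\mu$ is feasible; it is also a non-increasing function of $\eta$, so $(\eta,X^{*})$ is anti-comonotonic. Atomlessness further yields $\eta=F_{\eta}^{-1}(1-U)$ almost surely, and therefore
$$\BE{\eta X^{*}}=\BE{F_{\eta}^{-1}(1-U)\,\mu^{-1}(U)}=\int_{0}^{1}F_{\eta}^{-1}(1-p)\mu^{-1}(p)\,\ddp,$$
which is the claimed value.

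For the lower bound I would invoke Hoeffding's covariance identity
$$\BE{\eta X}=\BE{\eta}\BE{X}+\int_{\R}\!\int_{\R}\!\big[\BP{\eta\leq s,\,X\leq t}-F_{\eta}(s)F_{X}(t)\big]\,\ds\,\dt,$$
together with the Fr\'echet-Hoeffding lower bound $\BP{\eta\leq s,\,X\leq t}\geq(F_{\eta}(s)+F_{X}(t)-1)^{+}$. Since $F_{X}=\mu$ is fixed across the feasible set $\{X\sim\mu\}$, this bounds the integrand uniformly in $X$. A brief case analysis using $F_{\eta}(\eta)\sim U[0,1]$ and the standard equivalence $\{\mu^{-1}(U)\leq t\}=\{U\leq\mu(t)\}$ shows that $(\eta,X^{*})$ saturates the Fr\'echet bound at every $(s,t)$, whence $\BE{\eta X}\geq\BE{\eta X^{*}}$ for every $X\sim\mu$.

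For uniqueness, any optimizer $X$ must saturate the Fr\'echet-Hoeffding bound at Lebesgue-a.e.~$(s,t)$, and by right-continuity of the joint CDF this extends to every $(s,t)$, pinning the joint law of $(\eta,X)$ to coincide with that of $(\eta,X^{*})$. Since $X^{*}$ is a measurable function of $\eta$, a standard regular conditional probability argument upgrades this equality in law to the almost-sure equality $X=X^{*}$. The main obstacle is the pointwise saturation step by $X^{*}$, which requires a careful case split on the sign of $F_{\eta}(s)+\mu(t)-1$, and the uniqueness upgrade from almost-everywhere CDF equality to almost-sure equality of random variables; both steps hinge essentially on the atomlessness of $\eta$.
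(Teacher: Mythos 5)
The paper does not actually prove Lemma \ref{HL}: it is stated as a classical result (the probabilistic form of the Hardy--Littlewood inequality) and used as a black box in the proof of Theorem \ref{linear1}. Your proposal therefore cannot match ``the paper's proof,'' but as a self-contained argument it is correct and follows one of the two standard routes (Hoeffding's covariance identity plus the Fr\'echet--Hoeffding lower bound, rather than the layer-cake/rearrangement argument one often sees cited in this literature). The key steps all check out: $U=1-F_{\eta}(\eta)$ is uniform by atomlessness; $\eta=F_{\eta}^{-1}(1-U)$ a.s.\ because the exceptional set consists of flat pieces of $F_{\eta}$, each carrying zero mass; the joint law of the anti-comonotonic pair is exactly $(F_{\eta}(s)+\mu(t)-1)^{+}$, so the Hoeffding lower bound is attained; and equality forces the joint law of $(\eta,X)$ to coincide with that of $(\eta,X^{*})$. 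Two small points worth making explicit if you write this up: Hoeffding's identity requires $\BE{|\eta X|}<\infty$, which here is covered by the paper's blanket assumption that all integrals involved exist; and for the final upgrade from equality of joint laws to $X=X^{*}$ a.s., it is cleaner to note that $X^{*}=h(\eta)$ for a Borel function $h$, so $\bp(X=h(\eta))=\bp(X^{*}=h(\eta))=1$ because the graph of $h$ is a Borel set in the plane --- this avoids the regular conditional probability machinery. Note also that atomlessness of $\eta$ is exactly what makes the uniqueness claim true, consistent with the remark following Theorem \ref{linear1} that dropping it costs uniqueness.
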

\par
The following result presents a complete answer to the problem \eqref{p1} when the driver is linear.
\begin{thm}\label{linear1}
\label{linear}
Under the assumption of Lemma \ref{lem1} and assuming further that
$\rho_{T}$ is atomless where $\rho_{t}$ is defined by \eqref{rho},
the $g$-expectation of $\mu$ is 
\begin{align*}
\BE[g]{\mu}\equiv \inf_{X\sim \mu }\;\BE[g]{X}
&=\BE[g]{X^*}=\int_{0}^{1}F_{\rho_{T}}^{-1}(1-p)\mu^{-1}(p)\ddp-\int_{0}^{T}\BE{\delta _{s}\rho _{s}}\ds,
\end{align*}
and $X^{*}=\mu ^{-1}(1-F_{\rho _{T}}(\rho _{T}))$ is the unique efficient terminal payoff.
\end{thm}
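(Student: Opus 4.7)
The plan is simply to combine Lemma \ref{lem1} with the Hardy--Littlewood inequality (Lemma \ref{HL}). First I would apply Lemma \ref{lem1} to obtain, for every $X\sim\mu$, the identity
\[
\BE[g]{X}=\BE{\rho_{T}X}-\int_{0}^{T}\BE{\delta_{s}\rho_{s}}\ds.
\]
The crucial observation is that the second term on the right-hand side depends only on the processes $r,\theta,\delta$ and not on the particular choice of $X$; hence it is a constant with respect to the optimization in \eqref{p1}. Minimizing $\BE[g]{X}$ over $\{X:X\sim\mu\}$ therefore reduces to minimizing $\BE{\rho_{T}X}$ over the same set.

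Next, using the hypothesis that $\rho_{T}$ is atomless, I would invoke Lemma \ref{HL} with $\eta=\rho_{T}$. This gives
\[
\inf_{X\sim\mu}\BE{\rho_{T}X}=\int_{0}^{1}F_{\rho_{T}}^{-1}(1-p)\mu^{-1}(p)\ddp,
\]
together with the explicit unique minimizer $X^{*}=\mu^{-1}(1-F_{\rho_{T}}(\rho_{T}))$, which is the unique random variable with distribution $\mu$ that is anti-comonotonic with $\rho_{T}$. Plugging this back into the first display then yields both the claimed formula for $\BE[g]{\mu}$ and identifies $X^{*}$ as the efficient terminal payoff.

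Because the theorem is in essence a direct corollary of the two preceding lemmas, I do not foresee a substantive obstacle. The only minor point worth checking is that $X^{*}\in L^{2}(\BF_{T})$ so that Lemma \ref{lem1} genuinely applies to it; this is automatic from the implicit square-integrability of $\mu$ (needed for reachability) together with $X^{*}\sim\mu$. Uniqueness of the efficient payoff is inherited from the uniqueness statement in Lemma \ref{HL}, which relies precisely on the atomlessness of $\rho_{T}$.
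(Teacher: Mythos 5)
Your proposal is correct and follows exactly the paper's argument: the paper's proof is the one-line remark that the result ``follows immediately from Lemma \ref{lem1} and Lemma \ref{HL}'', and you have simply written out the same two-step reduction (constant $\delta$-term, then Hardy--Littlewood applied to $\eta=\rho_{T}$) in detail. No gaps.
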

\begin{proof}
The results follow immediately from \citelem{lem1} and \citelem{HL}.
\end{proof}

\begin{remark}
The atomless assumption in the above results can be removed at the cost of efficient $X^{*}$ being not necessarily unique (see, e.g. \cite{X14}). Meanwhile,
if both $r$ and $\theta\neq 0$ are deterministic processes, then $\rho_{T}$ is log-normally distributed and thus automatically atomless.
\end{remark}


Now we proceed to study the efficient problem \eqref{p1} when the driver is nonlinear. While we are not yet able to
solve the nonlinear case in its greatest generality, we will discuss several special cases, each of which is either interesting theoretically in its own right or
has significant financial applications, in the next two sections.

The following comparison principle for BSDEs is well known (\cite{EPQ97}), which is needed for subsequent analysis.
\begin{lem}[Comparison principle]\label{compare}
Let $(Y^{i},Z^{i})$ be the solutions of the BSDE \eqref{bsde1} with parameters $(g,X)=(g_{i},X_{i})$, $i=1,2$, respectively.
If
\[X_{1}\geq X_{2},\quad g_{1}(t,Y_{t}^{1},Z_{t}^{1})\leq g_{2}(t,Y_{t}^{1},Z_{t}^{1}),
\quad\mbox{a.s. a.e.};\]
or
\[X_{1}\geq X_{2},\quad g_{1}(t,Y_{t}^{2},Z_{t}^{2})\leq g_{2}(t,Y_{t}^{2},Z_{t}^{2}),\quad\mbox{a.s. a.e.};\]
then $Y_{t}^{1}\geq Y_{t}^{2}$, a.s., $\forall t\in[0,T]$.
\end{lem}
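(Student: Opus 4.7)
The plan is to reduce the claim to the explicit linear BSDE representation in \citelem{lem1} via a standard linearization trick. Set $\hat Y_t := Y_t^1 - Y_t^2$ and $\hat Z_t := Z_t^1 - Z_t^2$; subtracting the two BSDEs yields
\begin{align*}
\dd \hat Y_t = \bigl[g_1(t,Y_t^1,Z_t^1) - g_2(t,Y_t^2,Z_t^2)\bigr]\dt + \hat Z_t' \dd B_t, \qquad \hat Y_T = X_1 - X_2 \geq 0.
\end{align*}
The target is to recast this as a linear BSDE whose terminal value and residual source term both have the correct sign, so that the representation formula of \citelem{lem1} forces $\hat Y_t \geq 0$.

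Under the first hypothesis, I would decompose the drift as
\begin{align*}
g_1(t,Y_t^1,Z_t^1) - g_2(t,Y_t^2,Z_t^2) = \phi_t + \bigl[g_2(t,Y_t^1,Z_t^1) - g_2(t,Y_t^2,Z_t^2)\bigr],
\end{align*}
where $\phi_t := g_1(t,Y_t^1,Z_t^1) - g_2(t,Y_t^1,Z_t^1) \leq 0$ by assumption. A telescoping argument combined with the uniform Lipschitz continuity of $g_2$ in $(y,z)$ produces bounded $\{\BF_t\}$-adapted processes $a$ and $b$ taking values in $\R$ and $\R^{n}$ respectively such that
\begin{align*}
g_2(t,Y_t^1,Z_t^1) - g_2(t,Y_t^2,Z_t^2) = a_t \hat Y_t + b_t' \hat Z_t;
\end{align*}
one may take, for instance, $a_t = [g_2(t,Y_t^1,Z_t^1) - g_2(t,Y_t^2,Z_t^1)]/\hat Y_t$ on $\{\hat Y_t \neq 0\}$ (and $0$ otherwise), and build $b_t$ coordinate by coordinate in the same fashion. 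Then $(\hat Y, \hat Z)$ satisfies a linear BSDE with driver $a_t y + b_t' z + \phi_t$ and terminal value $\hat Y_T \geq 0$, and \citelem{lem1} gives
\begin{align*}
\hat Y_t = \rho_t^{-1}\, \BE{\rho_T (X_1 - X_2) - \int_t^T \phi_s \rho_s \ds \;\bigg|\; \BF_t},
\end{align*}
where $\rho$ is the strictly positive exponential \eqref{rho} built from $(a,b)$. Since $\rho > 0$, $X_1 - X_2 \geq 0$, and $-\phi_s \geq 0$, the conditional expectation is nonnegative, whence $\hat Y_t \geq 0$ a.s., as claimed.

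For the second hypothesis the argument is symmetric: I would instead split the drift as $[g_1(t,Y_t^1,Z_t^1) - g_1(t,Y_t^2,Z_t^2)] + [g_1(t,Y_t^2,Z_t^2) - g_2(t,Y_t^2,Z_t^2)]$, linearize the first bracket via the Lipschitz continuity of $g_1$, and observe that the second bracket is $\leq 0$ by assumption; the same appeal to \citelem{lem1} then closes the proof. The only (mild) obstacle is ensuring that the linearizing processes $a$ and $b$ arising from the telescoping ratios are genuinely progressively measurable and essentially bounded; this is routine from the Lipschitz hypothesis on the drivers, but is what legitimately allows one to invoke the linear BSDE representation with a well-defined positive $\rho$.
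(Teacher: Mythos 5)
Your argument is correct. The paper itself gives no proof of this lemma --- it simply cites El Karoui--Peng--Quenez \cite{EPQ97} --- and your linearization is precisely the classical argument from that reference: subtract the two equations, isolate the nonpositive residual $\phi_t$ using whichever hypothesis is in force, linearize the remaining driver increment via the Lipschitz condition into $a_t\hat Y_t+b_t'\hat Z_t$ with bounded adapted coefficients, and conclude from the explicit representation of Lemma~\ref{lem1} with strictly positive $\rho$.
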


\section{Two special cases}

In this section, we study two special cases.

\subsection{Case 1: $g(t,y,z)\equiv r_{t}y+\theta' _{t}z-(R_{t}-r_{t})(y-\mathbf{1}'(\sigma_{t}')^{-1}z)^{-}$} 
\noindent
We first consider a ``benchmark" case, which is important because its corresponding BSDE describes a wealth equation in a market with the loan rate $R_{t}$ generally higher than the deposit rate $r_{t}$: $R_{t}\geq r_{t},\;\;t\in[0,T]$.

Let us motivate the driver by considering a continuous-time market with multiple stocks and a bond. For $t\in[0,T]$, let $\sigma _{t}$ be the volatility matrix that is assumed to be non-singular, and $\theta _{t}$ be the risk premium of the market based on the deposit rate, i.e., $\theta _{t}=\sigma _{t}^{-1}(b_{t}-r_{t}\mathbf{1})$. Throughout this subsection we assume
\begin{assmp}\label{assmp1}
$R_{t}$, $r_{t}$, $\sigma _{t}$, $b_{t}$, and $\sigma _{t}^{-1}$ are deterministic, measurable functions of $t$ that are uniformly bounded.
\end{assmp}

\par
Let $\pi_{t}$ be the dollar amount invested in the risky assets at time $t$ in a small investor's portfolio.
Then it is known (\cite{EPQ97}) that her (self-financing) wealth process $Y=\{Y_{t},\;0\leq t\leq T\}$ to hedge a terminal random payoff $X$ follows a BSDE:
\begin{align}\label{diffBSDE}
\begin{cases}
\dd Y_{t}=\left[r_{t}Y_{t}+\theta' _{t}\sigma' _{t}\pi_{t}-(R_{t}-r_{t})(Y_{t}-\mathbf{1}'\pi_{t})^{-}\right]\dt+\pi'_{t}\sigma
_{t}\dd B_{t},\\
Y_{T}=X.
\end{cases}
\end{align}
We can regard $(Y_{t},Z_{t})\equiv (Y_{t},\sigma_{t}'\pi_{t})$ as the solution of the BSDE \eqref{bsde1} with the driver
\begin{align}\label{driver1}
g(t,y,z):= r_{t}y+\theta' _{t}z-(R_{t}-r_{t})(y-\mathbf{1}'(\sigma_{t}')^{-1}z)^{-}.
\end{align}

\noindent
In what follows we study the problem \eqref{p1} with the driver given by (\ref{driver1}),
and give some {\it sufficient} conditions on the terminal distribution $\mu$ under which \eqref{p1} can be solved and the $g$-expectation of $\mu$ expressed.
\par
Denote by $\mathbf{D}_{1,2}$ the set of random variables/vectors $\xi $ that
admit Malliavin derivatives $D_{t}\xi $ for a.e. $t\in \lbrack 0,T]$ with
\[\left\Vert \xi \right\Vert _{1,2}=\BE{\left\Vert \xi \right\Vert^{2}+\int_{0}^{T}\left\Vert D_{s}\xi \right\Vert ^{2}\ds} <+\infty.\]
\par

\begin{prop}\label{prop:bounds}
Suppose \citeassmp{assmp1} holds and $Y$ solves \eqref{diffBSDE} with $X\in \mathbf{D}_{1,2}$.
\begin{itemize}
\item
If $\mathbf{1}'(\sigma_{t}')^{-1}D_{t}X\geq X$, a.s. a.e., then $Y_{t}=
\overline{Y}_{t}$, a.s., $\forall t\in[0,T]$, where $(\overline{Y},\overline{Z})$ is the solution of the linear BSDE
\begin{align}\label{processoY}
\begin{cases}
\dd\overline{Y}_{t}=(r_{t}\overline{Y}_{t}+\theta' _{t}\overline{Z}_{t}+(R_{t}-r_{t})(\overline{Y}_{t}-\mathbf{1}'(\sigma'_{t})^{ -1}\overline{Z}_{t}))\dt+\overline{Z}'_{t}\dd B_{t},\\
\overline{Y}_{T}=X.
\end{cases}
\end{align}
Moreover, $\overline{Y}_{t}=\overline{\rho }_{t}^{-1}\BE{\overline{\rho }_{T}X|\mathcal{F}_{t}}$, a.s., $\forall t\in[0,T]$, where
\begin{multline}\label{orho}
\qquad\quad\overline{\rho}_{t}=\exp \bigg(-\int_{0}^{t}\left(R_{s}+\tfrac{1}{2}\Vert\theta_{s}-(R_{s}-r_{s})\sigma_{s}^{ -1}\mathbf{1}\Vert^{2}\right)\ds\\
-\int_{0}^{t}(\theta_{s}-(R_{s}-r_{s})\sigma_{s}^{ -1}\mathbf{1})'\dd B_{s}\bigg).\qquad\quad
\end{multline}

\item
If $\mathbf{1}'(\sigma_{t}')^{-1}D_{t}X\leq X$, a.s. a.e., then $Y_{t}=
\widetilde{Y}_{t}$, a.s., $\forall t\in[0,T]$, where $(\widetilde{Y},\widetilde{Z})$ is the solution of the linear BSDE%
\begin{align}\label{processwY}
\begin{cases}
\dd\widetilde{Y}_{t}=(r_{t}\widetilde{Y}_{t}+\theta' _{t}\widetilde{Z}_{t})\dt+\widetilde{Z}'_{t}\dd B_{t},\\
\widetilde{Y}_{T}=X.
\end{cases}
\end{align}
Moreover, $\widetilde{Y}_{t}= \rho_{t}^{-1}\BE{\rho_{T}X|\mathcal{F}_{t}}$, a.s., $\forall t\in[0,T]$, with $\rho_{t}$ defined by \eqref{rho}.
\end{itemize}
\end{prop}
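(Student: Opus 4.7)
The plan is to verify directly that in each case the candidate $(\overline{Y},\overline{Z})$ or $(\widetilde{Y},\widetilde{Z})$ defined by the linear BSDE in fact solves the nonlinear BSDE \eqref{diffBSDE}; uniqueness of the solution of \eqref{diffBSDE} then forces it to coincide with $(Y,Z)$. The structural observation driving everything is that the driver \eqref{driver1} coincides with the linear driver of \eqref{processoY} precisely when $y-\mathbf{1}'(\sigma_t')^{-1}z\leq 0$ (the negative part is then $-(y-\mathbf{1}'(\sigma_t')^{-1}z)$ and the two sign flips combine into a $+$), and with the linear driver of \eqref{processwY} precisely when $y-\mathbf{1}'(\sigma_t')^{-1}z\geq 0$ (the negative part vanishes). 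So the task reduces to verifying, under each Malliavin sign hypothesis on $X$, that the associated linear solution has the correct sign of $y-\mathbf{1}'(\sigma_t')^{-1}z$.

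For the first case, \eqref{processoY} is a linear BSDE with drift coefficient $R_t$ and risk premium $\overline{\theta}_t:=\theta_t-(R_t-r_t)\sigma_t^{-1}\mathbf{1}$, so \citelem{lem1} immediately gives $\overline{Y}_t=\overline{\rho}_t^{-1}\BE{\overline{\rho}_T X|\mathcal{F}_t}$ with $\overline{\rho}$ as in \eqref{orho}. The companion representation I will need is
\[\overline{Z}_t=\overline{\rho}_t^{-1}\BE{\overline{\rho}_T D_tX|\mathcal{F}_t},\]
obtained by applying the Clark-Ocone formula to the martingale $\overline{\rho}_t\overline{Y}_t=\BE{\overline{\rho}_TX|\mathcal{F}_t}$, using $X\in\mathbf{D}_{1,2}$ together with $D_t\overline{\rho}_T=-\overline{\theta}_t\overline{\rho}_T$ (valid since $\overline{\theta}$ is deterministic under \citeassmp{assmp1}) and matching its diffusion coefficient with the one derived from It\^o's formula applied to $\overline{\rho}_t\overline{Y}_t$. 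Since $\mathbf{1}'(\sigma_t')^{-1}$ is deterministic, it can be pulled inside the conditional expectation, and the hypothesis $\mathbf{1}'(\sigma_t')^{-1}D_tX\geq X$ yields
\[\overline{Y}_t-\mathbf{1}'(\sigma_t')^{-1}\overline{Z}_t=\overline{\rho}_t^{-1}\BE{\overline{\rho}_T\bigl(X-\mathbf{1}'(\sigma_t')^{-1}D_tX\bigr)|\mathcal{F}_t}\leq 0.\]
Substituting $(\overline{Y},\overline{Z})$ into \eqref{driver1} therefore collapses it exactly to the linear driver of \eqref{processoY}, so $(\overline{Y},\overline{Z})$ solves \eqref{diffBSDE}, and uniqueness gives $Y=\overline{Y}$.

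The second case proceeds identically with $\overline{\rho}$ replaced by $\rho$ and $\overline{\theta}$ replaced by $\theta$: \citelem{lem1} and the same Clark-Ocone argument give $\widetilde{Y}_t=\rho_t^{-1}\BE{\rho_TX|\mathcal{F}_t}$ and $\widetilde{Z}_t=\rho_t^{-1}\BE{\rho_TD_tX|\mathcal{F}_t}$; the hypothesis $\mathbf{1}'(\sigma_t')^{-1}D_tX\leq X$ then forces $\widetilde{Y}_t-\mathbf{1}'(\sigma_t')^{-1}\widetilde{Z}_t\geq 0$, so the negative part in \eqref{driver1} vanishes at $(\widetilde{Y},\widetilde{Z})$, the process $(\widetilde{Y},\widetilde{Z})$ solves \eqref{diffBSDE}, and uniqueness gives $Y=\widetilde{Y}$.

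The main technical obstacle is the rigorous justification of the Malliavin representation of $Z$ in each linear BSDE: I must propagate Malliavin differentiability from $X\in\mathbf{D}_{1,2}$ to the products $\rho_TX$ and $\overline{\rho}_TX$ via the chain rule, and then invoke Clark-Ocone at each $t$ almost everywhere so that the sign calculation above is valid pointwise in $t$. Under \citeassmp{assmp1} the exponential density has deterministic, uniformly bounded coefficients, so the standard Malliavin chain rule applies and this step is routine; but it is the only place where the hypothesis $X\in\mathbf{D}_{1,2}$ is genuinely used.
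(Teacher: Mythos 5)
Your proof is correct and reaches the same structural reduction as the paper --- namely that under each Malliavin sign hypothesis the process $\overline{Y}_t-\mathbf{1}'(\sigma_t')^{-1}\overline{Z}_t$ (resp.\ $\widetilde{Y}_t-\mathbf{1}'(\sigma_t')^{-1}\widetilde{Z}_t$) has a definite sign, so that the nonlinear driver \eqref{driver1} evaluated along the linear solution collapses to the linear driver of \eqref{processoY} (resp.\ \eqref{processwY}) and uniqueness for \eqref{diffBSDE} finishes the job --- but you establish that sign by a genuinely different mechanism. The paper differentiates the linear BSDE \eqref{processoY} in the Malliavin sense, uses linearity/homogeneity to recognize $\mathbf{1}'(\sigma_s')^{-1}(D_s\overline{Y},D_s\overline{Z})$ as the solution of the same BSDE with terminal value $\mathbf{1}'(\sigma_s')^{-1}D_sX$, and then invokes the comparison principle (\citelem{compare}) together with the identity $D_s\overline{Y}_s=\overline{Z}_s$ to get $\overline{Y}_s\leq\mathbf{1}'(\sigma_s')^{-1}\overline{Z}_s$. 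You instead derive the closed form $\overline{Z}_t=\overline{\rho}_t^{-1}\BE{\overline{\rho}_T D_tX\,|\,\mathcal{F}_t}$ by Clark--Ocone applied to the martingale $\overline{\rho}_t\overline{Y}_t$ and read the sign off directly from $\overline{\rho}_T>0$. Both routes rest on Malliavin differentiability of $X$ and on the deterministic coefficients of \citeassmp{assmp1}; yours is more explicit and avoids the comparison principle entirely, at the price of needing the product rule for $D_t(\overline{\rho}_TX)$ and a version of Clark--Ocone applicable to $\overline{\rho}_TX$, which for $X\in L^2$ is a priori only in $L^p$ with $p<2$. That integrability point (handled, e.g., by truncation plus the fact that $\overline{\rho}_T$ has moments of all orders) deserves a sentence, but it is of the same technical order as the steps the paper itself leaves implicit, such as the well-posedness of the Malliavin-differentiated BSDE \eqref{processoDY} and the applicability of \citelem{compare} to it.
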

\begin{proof}
We will prove the first case only, as the second one can be proved similarly.
Fix any $0\leq s<T$.
By Malliavin calculus, we have for $s\leq t\leq T$,
\begin{align}\label{processoDY}
\begin{cases}
\dd\; (D_{s}\overline{Y}_{t})=(r_{t}D_{s}\overline{Y}_{t}+\theta' _{t}D_{s}\overline{Z}_{t}\\
\qquad\qquad\quad+(R_{t}-r_{t})(D_{s}\overline{Y}_{t}-\mathbf{1}'(\sigma_{t}')^{-1}D_{s}\overline{Z}_{t}))\dt+(D_{s}\overline{Z}_{t})'\dd B_{t},\\
D_{s}\overline{Y}_{T} =D_{s}X.
\end{cases}
\end{align}
This BSDE is linear; so it is homogenous. Consequently,
\[(\breve{Y}_{t},\breve{Z}_{t})= (\mathbf{1}'(\sigma_{s}')^{-1}D_{s}\overline{Y}_{t}, \mathbf{1}'(\sigma_{s}')^{-1}D_{s} \overline{Z}_{t})= \mathbf{1}'(\sigma_{s}')^{-1}(D_{s}\overline{Y}_{t}, D_{s}\overline{Z}_{t}),\quad\mbox{a.s. a.e.}\] solves
\begin{align}\label{processbY}
\begin{cases}
\dd\breve{Y}_{t}=(r_{t}\breve{Y}_{t}+\theta' _{t}\breve{Z}_{t}+(R_{t}-r_{t})(\breve{Y}_{t}-\mathbf{1}'(\sigma'_{t})^{ -1}\breve{Z}_{t}))\dt+\breve{Z}'_{t}\dd B_{t},\\
\breve{Y}_{T}= \mathbf{1}'(\sigma_{s}')^{-1}D_{s}X.
\end{cases}
\end{align}
Because \[X\leq \mathbf{1}'(\sigma_{s}')^{-1}D_{s} X, \quad\mbox{a.s. a.e.},\] applying \citelem{compare} to \eqref{processoY} and \eqref{processbY}, we deduce $\overline{Y}_{t}\leq \breve{Y}_{t}$ a.s., $\forall t\in[s,T]$, In particular when $t=s$, it holds that
\[\overline{Y}_{s}\leq \breve{Y}_{s}=\mathbf{1}'(\sigma_{s}')^{-1}D_{s}\overline{Y}_{s}=\mathbf{1}'(\sigma_{s}')^{-1}\overline{Z}_{s},\]
where the last equation is due to $D_{s}\overline{Y}_{s}=\overline{Z}_{s}$. The above inequality holds for any $0\leq s<T$; so we may rewrite \eqref{processoY} as
\begin{align*}
\begin{cases}
\dd\overline{Y}_{t}=(r_{t}\overline{Y}_{t}+\theta' _{t}\overline{Z}_{t}-(R_{t}-r_{t})(\overline{Y}_{t}-\mathbf{1}'(\sigma'_{t})^{-1}\overline{Z}_{t})^{-})\dt+\overline{Z}'_{t}\dd B_{t},\\
\overline{Y}_{T}=X.
\end{cases}
\end{align*}
Comparing the above with the BSDE \eqref{diffBSDE} and using the uniqueness of solution, we conclude that $Y_{t}= \overline{Y}_{t}$.
Moveover, \citelem{lem1} gives the expression of $\overline{Y}_{t}$.
\end{proof}

\begin{prop}\label{bounds}
\label{uplow1}
Suppose \citeassmp{assmp1} holds.
Let the driver $g$ be defined by \eqref{driver1}. Then
\begin{align*}
\inf_{X\sim \mu }\;\BE[g]{X}\geq \max\left\{\BE{\rho_{T}\widetilde{X}}, \BE{\overline{\rho }_{T}\overline{X}}\right\},
\end{align*}%
where $\rho_{t}$ and $\overline{\rho }_{t}$ are defined by \eqref{rho} and \eqref{orho}, respectively,
$\widetilde{X}=\mu ^{-1}(1-F_{\rho _{T}}(\rho _{T}))$ and
$\overline{X}=\mu ^{-1}(1-F_{\overline{\rho} _{T}}(\overline{\rho} _{T}))$.
Moreover,
\begin{align*}
\BE[g]{\mu}\equiv \inf_{X\sim \mu }\;\BE[g]{X}=\BE[g]{\widetilde{X}}=\BE{\rho_{T}\widetilde{X}},
\end{align*}%
if $\mathbf{1}'\sigma _{t}^{-1}D_{t}\widetilde{X}\leq \widetilde{X}$, a.s. a.e.;
and
\begin{align*}
\BE[g]{\mu}\equiv \inf_{X\sim \mu }\;\BE[g]{X}=\BE[g]{\overline{X}}=\BE{\overline{\rho }_{T}\overline{X}},
\end{align*}
if $\mathbf{1}'\sigma _{t}^{-1}D_{t}\overline{X}\geq \overline{X}$, a.s. a.e..
\end{prop}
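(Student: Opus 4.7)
The plan is to combine the comparison principle with the Hardy--Littlewood inequality to derive the lower bound, and then invoke \citeprop{prop:bounds} to show this bound is attained under the stated Malliavin conditions.

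First, I would introduce two auxiliary linear drivers,
\[g_1(t,y,z) := r_t y + \theta_t' z, \qquad g_2(t,y,z) := r_t y + \theta_t' z + (R_t - r_t)\bigl(y - \mathbf{1}'(\sigma_t')^{-1} z\bigr),\]
and observe that with $u := y - \mathbf{1}'(\sigma_t')^{-1} z$ and $u = u^+ - u^-$, the identities $g_1 - g = (R_t - r_t) u^- \geq 0$ and $g_2 - g = (R_t - r_t)(u + u^-) = (R_t - r_t) u^+ \geq 0$ hold pointwise thanks to $R_t \geq r_t$. Hence $g \leq g_1$ and $g \leq g_2$ pointwise. Note that $g_1$ is the linear driver whose associated pricing kernel is $\rho_t$, and a rearrangement gives $g_2 = R_t y + (\theta_t - (R_t - r_t)(\sigma_t')^{-1}\mathbf{1})' z$, which is the linear driver whose pricing kernel is exactly $\overline{\rho}_t$ defined in \eqref{orho}.

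Next, for an arbitrary $X \sim \mu$, denote the initial values of the BSDE \eqref{bsde1} with terminal $X$ and drivers $g$, $g_1$, $g_2$ by $Y_0$, $\widetilde{Y}_0$, $\overline{Y}_0$ respectively. By \citelem{compare}, the pointwise inequalities $g \leq g_1$ and $g \leq g_2$ imply $Y_0 \geq \widetilde{Y}_0$ and $Y_0 \geq \overline{Y}_0$. \citelem{lem1} then identifies $\widetilde{Y}_0 = \BE{\rho_T X}$ and $\overline{Y}_0 = \BE{\overline{\rho}_T X}$. Taking infimum over $X \sim \mu$ and applying \citelem{HL} with $\eta = \rho_T$ and with $\eta = \overline{\rho}_T$ produces the lower bound
\[\inf_{X \sim \mu} \BE[g]{X} \geq \max\left\{\BE{\rho_T \widetilde{X}},\; \BE{\overline{\rho}_T \overline{X}}\right\}.\]

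For the equality assertions, I would match the lower bound by plugging in the candidate terminal payoff. If $\mathbf{1}'(\sigma_t')^{-1} D_t \widetilde{X} \leq \widetilde{X}$ a.s. a.e., the second bullet of \citeprop{prop:bounds} applied with terminal $\widetilde{X}$ forces the solution of the nonlinear BSDE to coincide with the solution $\widetilde{Y}$ of the linear BSDE \eqref{processwY}, whence $\BE[g]{\widetilde{X}} = \BE{\rho_T \widetilde{X}}$; since $\widetilde{X} \sim \mu$, this matches the lower bound and yields the first equality. The case $\mathbf{1}'(\sigma_t')^{-1} D_t \overline{X} \geq \overline{X}$ is handled symmetrically via the first bullet of \citeprop{prop:bounds}. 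The main obstacle is carefully pinning down the two driver inequalities (including matching the sign convention of \citelem{compare}); once these are in hand, the argument is a transparent chaining of results already available.
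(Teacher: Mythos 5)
Your proposal is correct and follows essentially the same route as the paper: the same two auxiliary linear drivers $g_1$ and $g_2$, the same pointwise inequalities $g\leq g_1$ and $g\leq g_2$ fed into the comparison principle, Theorem \ref{linear1} (i.e.\ Lemma \ref{lem1} plus Hardy--Littlewood) for the lower bound, and Proposition \ref{prop:bounds} to show the bound is attained under the two Malliavin conditions. The only nit is a transposition slip in your rewriting of $g_2$ (the coefficient of $z$ should involve $\sigma_t^{-1}\mathbf{1}$, matching \eqref{orho}, rather than $(\sigma_t')^{-1}\mathbf{1}$), which does not affect the argument.
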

\begin{proof}
The second part of the proposition is an immediate consequence of the first part
along with \citeprop{prop:bounds} and \citethm{linear1}. So we only need to show the first part.
Let $g_{1}(t,y,z)= r_{t}y+\theta' _{t}z$ and
$g_{2}(t,y,z)\equiv r_{t}y+\theta' _{t}z+(R_{t}-r_{t})(y-\mathbf{1}'(\sigma_{t}')^{-1}z)$. Because $0\geq -a^-$ and $a=a^+-a^-\geq -a^-$, we have $g_1\geq g$ and $g_2\geq g$. So \citelem{compare} yields
\begin{equation*}
\BE[g]{X} \geq \BE[g_{1}]{X}, \quad
\BE[g]{X}\geq \BE[g_{2}]{X}
\end{equation*}
for any $X\sim\mu$. Notice that the drivers $g_{1}$ and $g_{2}$ are both linear; so \citethm{linear1} gives
\begin{equation*}
\inf_{X\sim \mu }\;\BE[g_{1}]{X}=\BE{\rho_{T}\widetilde{X}}, \quad
\inf_{X\sim \mu }\;\BE[g_{2}]{X}=\BE{\overline{\rho }_{T}\overline{X}}.
\end{equation*}
The desired result follows.
\end{proof}

\begin{thm} \label{optimal}
Suppose \citeassmp{assmp1} holds and the driver $g$ is defined by \eqref{driver1}.
\begin{itemize}
\item
Let $f(x)=\mu ^{-1}(1-F_{\rho_{T}}(x))$ and $c=\sup_{t\in[0,T]}\mathbf{1}'(\sigma_{t}')^{-1}\theta_{t}$. If
\begin{align}\label{suff2}
f(x)\geq -cxf_{x}(x),\quad\forall x>0,
\end{align}
then 
\begin{align}\label{suff0}
\BE[g]{\mu}=\BE{\rho_{T}\widetilde X}=\int_{0}^{1}F_{\rho_{T}}^{-1}(1-p)\mu^{-1}(p)\ddp,
\end{align}
where $\widetilde X=f(\rho_{T})$ and $\rho_{t}$ is defined by \eqref{rho}.
\item
Let $f(x)=\mu ^{-1}(1-F_{\overline{\rho}_{T}}(x))$ and $c=\inf_{t\in[0,T]}\mathbf{1}'(\sigma_{t}')^{-1}(\theta_{t}-(R_{t}-r_{t})\sigma_{t}^{-1}\mathbf{1})$.
If
\begin{align}\label{suff22}
f(x)\leq -c xf_{x}(x),\quad\forall x>0,
\end{align}
then 
\begin{align}\label{suff01}
\BE[g]{\mu}=\BE{\overline{\rho}_{T}\overline{X}}=\int_{0}^{1}F_{\overline{\rho}_{T}}^{-1}(1-p)\mu^{-1}(p)\ddp,
\end{align}
where $\overline{X}=\mu ^{-1}(1-F_{\overline{\rho}_{T}}(\overline{\rho}_{T}))$ and $\overline{\rho}_{t}$ is defined by \eqref{orho}.
\end{itemize}
\end{thm}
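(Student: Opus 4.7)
The heart of the argument is to show that the pointwise inequalities \eqref{suff2} and \eqref{suff22} on $f$ translate, via a Malliavin-calculus computation, into exactly the Malliavin-type hypotheses $\mathbf{1}'(\sigma_t')^{-1}D_t\widetilde X\le \widetilde X$ and $\mathbf{1}'(\sigma_t')^{-1}D_t\overline X\ge \overline X$ needed to invoke \citeprop{bounds}. Once either hypothesis is verified, the corresponding conclusion of \citeprop{bounds} yields the stated formula, and rewriting $\BE{\rho_T\widetilde X}$ or $\BE{\overline\rho_T\overline X}$ via the Hardy--Littlewood identity from \citelem{HL} gives the quantile integral form in \eqref{suff0} and \eqref{suff01}.

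First I would focus on the first bullet. Since $F_{\rho_T}$ is continuous (because $\rho_T$ is log-normal under \citeassmp{assmp1}, hence atomless) and $\mu^{-1}$ is nondecreasing, the map $f(x)=\mu^{-1}(1-F_{\rho_T}(x))$ is nonincreasing, so $-x f_x(x)\ge 0$ on $x>0$. Applying the Malliavin chain rule to $\widetilde X=f(\rho_T)$ using $D_t\rho_T=-\rho_T\theta_t$ for $t\le T$, I obtain
\begin{align*}
D_t\widetilde X=-f_x(\rho_T)\,\rho_T\,\theta_t,\qquad
\mathbf{1}'(\sigma_t')^{-1}D_t\widetilde X=-f_x(\rho_T)\,\rho_T\,\mathbf{1}'(\sigma_t')^{-1}\theta_t.
\end{align*}
Because $-f_x(\rho_T)\rho_T\ge 0$ and $\mathbf{1}'(\sigma_t')^{-1}\theta_t\le c$ for every $t\in[0,T]$, this is bounded above by $-c\,\rho_T f_x(\rho_T)$. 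Assumption \eqref{suff2} applied at $x=\rho_T>0$ then gives $-c\rho_T f_x(\rho_T)\le f(\rho_T)=\widetilde X$, yielding the Malliavin inequality required by the second bullet of \citeprop{prop:bounds} and hence by \citeprop{bounds}. That proposition then delivers $\BE[g]{\mu}=\BE{\rho_T\widetilde X}$, and the Hardy--Littlewood representation from \citelem{HL} (anti-comonotonicity of $\widetilde X$ with $\rho_T$) rewrites this as $\int_0^1 F_{\rho_T}^{-1}(1-p)\mu^{-1}(p)\,\dd p$.

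The second bullet is entirely parallel. Using $D_t\overline\rho_T=-\overline\rho_T(\theta_t-(R_t-r_t)\sigma_t^{-1}\mathbf{1})$, one computes
\begin{align*}
\mathbf{1}'(\sigma_t')^{-1}D_t\overline X=-f_x(\overline\rho_T)\,\overline\rho_T\,\mathbf{1}'(\sigma_t')^{-1}\bigl(\theta_t-(R_t-r_t)\sigma_t^{-1}\mathbf{1}\bigr).
\end{align*}
Again $-f_x(\overline\rho_T)\overline\rho_T\ge 0$, and now the \emph{infimum} $c$ is the relevant bound: the right-hand side is at least $-c\,\overline\rho_T f_x(\overline\rho_T)$, which by \eqref{suff22} at $x=\overline\rho_T$ dominates $f(\overline\rho_T)=\overline X$. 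This matches the first bullet of \citeprop{prop:bounds}, and \citeprop{bounds} together with \citelem{HL} produces \eqref{suff01}.

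The main obstacle is the technical justification that $\widetilde X,\overline X\in\mathbf{D}_{1,2}$ and that the chain rule applies: $f$ is in general only nonincreasing and need not be smooth, so I would either (i) impose the tacit standing smoothness used in \eqref{suff2}--\eqref{suff22} and check integrability directly from boundedness of $r,R,\theta,\sigma^{-1}$ under \citeassmp{assmp1}, or (ii) approximate $f$ by smooth nonincreasing functions $f^{(k)}$, apply the argument to $\widetilde X^{(k)}=f^{(k)}(\rho_T)$ and $\overline X^{(k)}=f^{(k)}(\overline\rho_T)$, and pass to the limit using the $L^2$ continuity of the BSDE solution map and Fatou, noting that the quantile integrals on the right-hand side of \eqref{suff0}--\eqref{suff01} are stable under such approximations. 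Either route handles the regularity issue; the algebraic core of the proof is the short chain-rule computation above.
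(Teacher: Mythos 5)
Your proposal is correct and follows essentially the same route as the paper: verify the Malliavin-derivative inequalities $\mathbf{1}'(\sigma_t')^{-1}D_t\widetilde X\le \widetilde X$ and $\mathbf{1}'(\sigma_t')^{-1}D_t\overline X\ge \overline X$ via the chain rule with $D_t\rho_T=-\rho_T\theta_t$ (resp.\ $D_t\overline\rho_T$), the sign $f_x\le 0$, and the bounds defining $c$, then invoke Proposition \ref{bounds} and Lemma \ref{HL}. Your closing remark on the regularity of $f$ and membership in $\mathbf{D}_{1,2}$ addresses a point the paper's proof leaves tacit, but it does not change the argument.
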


\begin{proof}
To prove the first claim \eqref{suff0}, by \citeprop{bounds},
it suffices to prove
\begin{align}\label{suff1}
\mathbf{1}'(\sigma_{t}')^{-1}D_{t} \widetilde X\leq \widetilde X, \quad\text{a.s. a.e.}.
\end{align}
Indeed, by the chain rule of Malliavin calculus and noting that $f_{x}\leq 0$, we obtain
\begin{align}\label{fineq}
\mathbf{1}'(\sigma_{t}')^{-1}D_{t} \widetilde X &=\mathbf{1}'(\sigma_{t}')^{-1}f_{x}(\rho_{T})D_{t}\rho_{T}\nn\\
&=f_{x}(\rho_{T})\mathbf{1}'(\sigma_{t}')^{-1}(-\rho_{T}\theta_{t})\nn\\
&=-\rho_{T}f_{x}(\rho_{T})\mathbf{1}'(\sigma_{t}')^{-1}\theta_{t}\nn\\
&\leq -c\rho_{T}f_{x}(\rho_{T})\nn\\
&\leq f(\rho_{T})=\widetilde X.
\end{align}
Hence \eqref{suff0} holds.

We now prove the second claim \eqref{suff01}. By \citeprop{bounds}, it is sufficient to prove
\begin{align}\label{suff1}
\mathbf{1}'(\sigma_{t}')^{-1}D_{t} \overline{X}\geq \overline{X}, \quad\text{a.s. a.e.}.
\end{align}
We rewrite $\overline{X}=f(\overline{\rho}_{T})$ with $f(x)=\mu ^{-1}(1-F_{\overline{\rho}_{T}}(x))$.
By the chain rule of Malliavin calculus,
\begin{align*}
\mathbf{1}'(\sigma_{t}')^{-1} D_{t} \overline{X} &=\mathbf{1}'(\sigma_{t}')^{-1}f_{x}(\overline{\rho}_{T})D_{t}\overline{\rho}_{T}\\
&=-\overline{\rho}_{T}f_{x}(\overline{\rho}_{T})\mathbf{1}'(\sigma_{t}')^{-1}(\theta_{t}-(R_{t}-r_{t})\sigma_{t}^{-1}\mathbf{1})\\
&\geq-c\overline{\rho}_{T}f_{x}(\overline{\rho}_{T})\\
&\geq f(\overline{\rho}_{T})=\overline{X},
\end{align*}
thanks to $f_{x}\leq 0$ and $\mathbf{1}'(\sigma_{t}')^{-1}(\theta_{t}-(R_{t}-r_{t})\sigma_{t}^{-1}\mathbf{1})\geq c$. The proof is complete.
\end{proof}

Let us now apply the above results to an expected utility maximization portfolio selection problem in the aforementioned market with different deposit and loan rates.
\par
We aim to find the best portfolio process $\pi$ to
\begin{align*} 
\max_{\pi} &\quad \BE{u(Y_{T})},\\
\mathrm{subject\; to}&\quad Y_{0}=x,\quad Y_{T}\geq 0,\nn
\end{align*}
where the wealth process $Y$ follows \eqref{diffBSDE}, and $u$ is a given increasing and concave utility function defined on $[0,\infty)$.
\par
This problem can be solve by the following two steps: first finding a best terminal payoff $X^{*}$ that solves
\begin{align} \label{eut0}
\max_{X} &\quad \BE{u(X)},\\
\mathrm{subject\; to}&\quad \BE[g]{X}=x,\quad X\geq 0,\nn
\end{align}
and then determining a portfolio $\pi$ to replicate $X^{*}$. Since the latter problem
is straightforward by the BSDE theory, we focus on the former.
\begin{thm}
If $\mathbf{1}'(\sigma_{t}')^{-1}\theta_{t}\leq 0$ for all $0\leq t\leq T$, then an optimal payoff of the problem \eqref{eut0} is given by
\[X^{*}=\left(u'\right)^{-1}(\lambda \rho_{T}),\]
where the Lagrange multiplier $\lambda>0$ is uniquely determined by $\BE{\rho_{T}X^{*}}=x$ and $\rho_t$ is defined by \eqref{rho}.
\end{thm}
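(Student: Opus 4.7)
The plan is to reduce \eqref{eut0} to the classical expected utility maximization under the \emph{linear} budget constraint driven by the pricing kernel $\rho_{T}$. The key device is the first part of \citethm{optimal}, which, under the sign hypothesis on $\mathbf{1}'(\sigma_{t}')^{-1}\theta_{t}$, will allow me to replace the nonlinear $\mathcal{E}^{g}[\cdot]$ by $\mathbb{E}[\rho_{T}\cdot]$ on the candidate $X^{*}$.

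First, I would introduce $X^{*}=(u')^{-1}(\lambda\rho_{T})=f(\rho_{T})$ with $f(x)=(u')^{-1}(\lambda x)$. Since $u$ is increasing and concave on $[0,\infty)$, the map $f$ is nonnegative and decreasing, so $f_{x}\leq 0$ and $f\geq 0$; in particular $X^{*}$ is anti-comonotonic with $\rho_{T}$, i.e., $X^{*}=\mu^{-1}(1-F_{\rho_{T}}(\rho_{T}))$ where $\mu$ is the law of $X^{*}$. Next I would check \eqref{suff2}: the standing hypothesis $\mathbf{1}'(\sigma_{t}')^{-1}\theta_{t}\leq 0$ forces $c=\sup_{t\in[0,T]}\mathbf{1}'(\sigma_{t}')^{-1}\theta_{t}\leq 0$, whence $-c\,x f_{x}(x)\leq 0\leq f(x)$ for every $x>0$. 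The first bullet of \citethm{optimal} then yields $\mathcal{E}^{g}[X^{*}]=\mathbb{E}[\rho_{T}X^{*}]$. Standard monotonicity of $\lambda\mapsto\mathbb{E}[\rho_{T}(u')^{-1}(\lambda\rho_{T})]$ produces a unique $\lambda>0$ with $\mathbb{E}[\rho_{T}X^{*}]=x$, making $X^{*}$ feasible for \eqref{eut0}.

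Second, for optimality, I would pick any admissible $X$ with $X\geq 0$ and $\mathcal{E}^{g}[X]=x$, and set $\widetilde X=F_{X}^{-1}(1-F_{\rho_{T}}(\rho_{T}))\sim X$. The lower bound in \citeprop{uplow1} gives $x=\mathcal{E}^{g}[X]\geq\mathbb{E}[\rho_{T}\widetilde X]$, so $\widetilde X\geq 0$ satisfies the linear budget $\mathbb{E}[\rho_{T}\widetilde X]\leq x$. The classical Lagrangian argument for $\max\mathbb{E}[u(\cdot)]$ under a linear budget identifies $X^{*}=(u')^{-1}(\lambda\rho_{T})$ as a maximizer of this relaxed problem, so $\mathbb{E}[u(\widetilde X)]\leq\mathbb{E}[u(X^{*})]$. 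Combined with $\mathbb{E}[u(X)]=\mathbb{E}[u(\widetilde X)]$ (law-invariance of the objective), this yields $\mathbb{E}[u(X)]\leq\mathbb{E}[u(X^{*})]$ and proves optimality.

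The main obstacle is ensuring the regularity needed to invoke \citethm{optimal}: one must verify $X^{*}\in\mathbf{D}_{1,2}$ so that the Malliavin-based \citeprop{prop:bounds} applies, and that $\lambda>0$ is well-defined and finite. Both are routine under mild growth/integrability conditions on $u'$ and $(u')^{-1}$ together with the log-normality of $\rho_{T}$ furnished by \citeassmp{assmp1}, so the substantive content of the proof is genuinely the three-line assembly of \citethm{optimal}, \citeprop{uplow1}, and the classical linear-budget EUM; all the nonlinearity in $g$ is absorbed into the sufficient condition \eqref{suff2}.
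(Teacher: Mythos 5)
Your proof is correct, and it is organized differently from the paper's. The paper argues top--down: it first asserts that, by law-invariance and monotonicity of the objective, an optimizer may be sought among cost-efficient payoffs; it then invokes Theorem \ref{optimal} (whose hypothesis \eqref{suff2} holds automatically because $c\leq 0$ and $\mu^{*}(0-)=0$ force $f\geq 0\geq -cxf_{x}$) to write the candidate as $(\mu^{*})^{-1}(1-F_{\rho_{T}}(\rho_{T}))$ and its cost as $\int_{0}^{1}F_{\rho_{T}}^{-1}(1-p)(\mu^{*})^{-1}(p)\,\mathrm{d}p$, and finally determines $(\mu^{*})^{-1}$ by solving the quantile optimization problem \eqref{qt1} with the Lagrange method. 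You instead run a verification argument: you use the same check of \eqref{suff2} and Theorem \ref{optimal} only to establish feasibility of the explicit candidate $X^{*}=(u')^{-1}(\lambda\rho_{T})$ (namely $\mathcal{E}^{g}[X^{*}]=\mathbb{E}[\rho_{T}X^{*}]=x$), and then dominate an arbitrary feasible $X$ by passing to its anti-comonotonic rearrangement $\widetilde X$, using the unconditional lower bound of Proposition \ref{uplow1} to get $\mathbb{E}[\rho_{T}\widetilde X]\leq x$ and the classical linear-budget expected-utility maximization to conclude. The two routes use the same ingredients, but yours buys something concrete: it sidesteps the paper's somewhat terse step ``an optimal solution can be found among cost-efficient payoffs'' (which implicitly needs an argument that slack in the budget can be absorbed without decreasing the objective) by relaxing the constraint to $\mathbb{E}[\rho_{T}\widetilde X]\leq x$ and exploiting concavity of $u$ directly; it also makes explicit the $\mathbf{D}_{1,2}$ regularity of $X^{*}$ needed to apply Proposition \ref{prop:bounds}, which the paper passes over. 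The only informalities you share with the paper are the Inada-type conditions guaranteeing a unique finite $\lambda>0$ and the KKT treatment of the constraint $X\geq 0$ when $\lambda\rho_{T}$ exceeds $u'(0+)$; neither affects the substance.
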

\begin{proof}
Because the objective of the problem \eqref{eut0} is law-invariant and increasing in $X$, an optimal solution $X^{*}$ can be found among those that are cost efficient, namely those that solve the following problem
\begin{align*}
\inf_{X\sim \mu^{*} }\;\BE[g]{X},
\end{align*}
where $\mu^{*}$ is the distribution of $X^{*}$. In view of the constraint $X\geq 0$ (so that $\mu^{*}(0-)=0$) and the assumption $\mathbf{1}'(\sigma_{t}')^{-1}\theta_{t}\leq 0$ for all $0\leq t\leq T$ (so that \eqref{suff2} holds), we deduce from \citethm{optimal} that $X^*$ can be represented in terms of its quantile:
\[X^{*}=(\mu^{*})^{-1}(1-F_{\rho_{T}}(\rho_{T})).\]
The rest is to determine $(\mu^{*})^{-1}$, which is an optimal solution of the following so-called quantile optimization problem
\begin{align} \label{qt1}
\max_{\mu^{-1}} &\quad \int_{R}u(x)\dd \mu(x)\equiv \int_{0}^{1}u(\mu^{-1}(p))\ddp,\\
\mathrm{subject\; to}&\quad \int_{0}^{1}F_{\rho_{T}}^{-1}(1-p)\mu^{-1}(p)\ddp=x,\quad \mu(0-)=0,\nn
\end{align}
where the constraint follows again from \citethm{optimal} that
\[\int_{0}^{1}F_{\rho_{T}}^{-1}(1-p)(\mu^{*})^{-1}(p)\ddp=\BE[g]{X^{*}}=x.\]
With $\mu^{-1}$ as the decision variable, this problem can be solved easily by the Lagrange method. The solution is
\[(\mu^{*})^{-1}(p)=(u')^{-1}(\lambda F_{\rho_{T}}^{-1}(1-p)),\]
for some constant Lagrange multiplier $\lambda>0$ satisfying
\[\int_{0}^{1}F_{\rho_{T}}^{-1}(1-p)(u')^{-1}(\lambda F_{\rho_{T}}^{-1}(1-p))\ddp=x.\]
Consequently, \[X^{*}=(\mu^{*})^{-1}(1-F_{\rho_{T}}(\rho_{T}))=(u')^{-1}(\lambda\rho_{T}).\]
Since
\[\BE{\rho_{T}X^{*}}=\BE{\rho_{T}(u')^{-1}(\lambda\rho_{T})}
=\int_{0}^{1}F_{\rho_{T}}^{-1}(p)(u')^{-1}(\lambda F_{\rho_{T}}^{-1}(p))\ddp=x,\]
the proof is complete.
\end{proof}

Note the result is under the assumption that $\mathbf{1}'(\sigma_{t}')^{-1}\theta_{t}\leq 0$. As an example of this assumption, take
\[\sigma_{t}=
\begin{pmatrix}
4 &\quad 3\\
3 &\quad 4
\end{pmatrix},\quad\theta_{t}=
\begin{pmatrix}
1\\
-2
\end{pmatrix}.
\]
One can verify that the assumption holds.

\subsection{Case 2: $g(t,y,z)=- A'_{t}z^{+}- C'_{t}z^{-}\leq0$}
\noindent
In this section, we study the case when drivers satisfy the following
\begin{assmp}\label{assmp2}
The driver $ g$ is a deterministic function of $(t,z)$, and satisfies
\begin{itemize}
\item Non-positivity: $g(t,z)\leq 0$;
\item Lipschitz continuity: $|g(t,z_{1})-g(t,z_{2})|\leq K\Vert z_{1}-z_{2}\Vert$, for some $K>0$;
\item Positive homogeneity in $z$: $g(t,\alpha z)=\alpha g(t,z)$, for any $\alpha\geq 0$.
\end{itemize}
\end{assmp}
For any $ g(t,z)$ satisfying \citeassmp{assmp2}, we have $|g(t,z)|\leq K\Vert z\Vert$ by its non-positivity and Lipschitz continuity.
\begin{remark}
A driver $g$ that satisfies \citeassmp{assmp2} must be of the form
\[g(t,z)= - A'_{t}z^{+}- C'_{t}z^{-},\quad\mbox{a.s. a.e.},\]
where any $k$-th component of $z^{+}$ (resp. $z^{-}$) is the positive (resp. negative) part of the $k$-th component of $z$, $A_{t}$ and $C_{t}$ are vectors with nonnegative components only and $\Vert A_{t}\Vert \leq K $ and $\Vert C_{t} \Vert \leq K$. In particular, it covers the special case when $g(t,z)= - A'_{t}|z|$
where $|z|$ is the vector whose components are the absolute values of the corresponding components of $z$.
\end{remark}

\begin{remark}
In this case, the driver is assumed to be independent of $y$. This assumption is valid in some financial applications. For example, with a single deterministic interest rate, the discounted wealth process satisfies a BSDE whose driver does not depend on $y$.
\end{remark}

\begin{thm}\label{sublinear}
If the driver $g$ satisfies \citeassmp{assmp2}, then 
\begin{align*}
\BE[g]{\mu}\equiv \inf_{X\sim \mu }\;\BE[g]{X}=\BE{\mu}
\end{align*}
for any distribution $\mu$.
\end{thm}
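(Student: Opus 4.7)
The plan is to prove the two inequalities $\mathcal{E}^g[\mu] \geq \mathbb{E}[\mu]$ and $\mathcal{E}^g[\mu] \leq \mathbb{E}[\mu]$ separately; the infimum is approached but generally not attained.

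For the lower bound, I would apply the comparison principle (\citelem{compare}) with the trivial driver $g_2 \equiv 0$. Since the non-positivity in \citeassmp{assmp2} gives $g(t,Z_t)\leq 0 = g_2(t,Y_t,Z_t)$ pointwise, and the two BSDEs share the same terminal value $X$, the comparison principle yields $\mathcal{E}^g[X] \geq \mathcal{E}^0[X] = \BE{X} = \BE{\mu}$ for every $X\sim\mu$. Taking infimum gives $\mathcal{E}^g[\mu] \geq \BE{\mu}$.

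For the upper bound, the idea is to construct a sequence $X_\varepsilon \sim \mu$ whose $g$-expectation approaches $\BE{\mu}$, by concentrating all the randomness of $X_\varepsilon$ in the short interval $[T-\varepsilon, T]$. Explicitly, set
\[
X_\varepsilon = \mu^{-1}\!\left(\Phi\!\left(\tfrac{B^1_T-B^1_{T-\varepsilon}}{\sqrt{\varepsilon}}\right)\right),
\]
which has distribution $\mu$ and is independent of $\mathcal{F}_{T-\varepsilon}$. The key observation is that by the positive homogeneity in \citeassmp{assmp2} we have $g(t,0)=0$, so the pair $(Y_t,Z_t)\equiv (y^*,0)$ solves the BSDE on $[0,T-\varepsilon]$ for any constant $y^*$. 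By pasting it with the unique solution $(\widetilde Y,\widetilde Z)$ of the BSDE restricted to $[T-\varepsilon,T]$ with terminal $X_\varepsilon$, and using that $X_\varepsilon$ is independent of $\mathcal{F}_{T-\varepsilon}$ (so $\widetilde Y_{T-\varepsilon}$ is deterministic), uniqueness of solutions forces $Y_0 = \widetilde Y_{T-\varepsilon}$.

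Taking expectation in the BSDE on $[T-\varepsilon,T]$ gives
\[
Y_0 = \widetilde Y_{T-\varepsilon} = \BE{X_\varepsilon} - \BE{\int_{T-\varepsilon}^{T} g(t,\widetilde Z_t)\,\dt} = \BE{\mu} + \BE{\int_{T-\varepsilon}^{T}(-g(t,\widetilde Z_t))\,\dt}.
\]
Using $|g(t,z)|\leq K\|z\|$ (a consequence of non-positivity, Lipschitz continuity, and $g(t,0)=0$) together with Cauchy--Schwarz,
\[
0 \leq Y_0 - \BE{\mu} \leq K\sqrt{\varepsilon}\,\sqrt{\BE{\int_{0}^{T}\|\widetilde Z_t\|^{2}\dt}}.
\]
The standard $L^2$-estimate for BSDEs (e.g.\ \cite{EPQ97}) gives $\BE{\int_{0}^{T}\|\widetilde Z_t\|^{2}\dt}\leq C\,\BE{|X_\varepsilon|^{2}} = C\,\BE{\mu^{2}}$, where $C$ depends only on $K$ and $T$ (not on $\varepsilon$), which is finite by the standing square-integrability of $\mu$. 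Hence $\mathcal{E}^g[X_\varepsilon] = Y_0 \to \BE{\mu}$ as $\varepsilon\downarrow 0$, proving $\mathcal{E}^g[\mu]\leq \BE{\mu}$ and completing the proof.

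The main delicate point is verifying that the pasted process is indeed the unique BSDE solution, which requires the driver's independence of $y$ (noted in \citeremark{remark}) and the positive homogeneity to guarantee $g(t,0)=0$; without these, the solution on $[0,T-\varepsilon]$ would not remain constant at the deterministic value inherited from $\widetilde Y_{T-\varepsilon}$.
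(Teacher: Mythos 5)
Your proof is correct, and the core idea for the upper bound -- exhibiting a minimizing sequence that concentrates all the randomness of the terminal value in a time window of vanishing length, so that positive homogeneity (which forces $g(t,0)=0$) makes the solution constant outside that window -- is exactly the paper's. The technical execution differs in both halves, however. For the lower bound the paper simply takes expectations in the BSDE, $\BE[g]{X}=\BE{X}-\int_0^T\BE{g(t,Z_t)}\dt\geq\BE{X}$, rather than invoking the comparison principle; both are fine. For the upper bound the paper places the randomness at the \emph{start} of the horizon, taking $X^{\alpha}=\mu^{-1}(\Phi(\alpha^{-1/2}B_{\alpha}))$, passes to the associated semilinear PDE via the nonlinear Feynman--Kac formula, performs the parabolic rescaling $\overline u^{\alpha}(t,x)=u^{\alpha}(\alpha t,\alpha^{1/2}x)$ to exhibit the driver as $\alpha^{1/2}g$ on the fixed horizon $[0,1]$, and then obtains an $\alpha$-uniform bound on $\int_0^1\BE{\Vert\overline Z^{\alpha}_t\Vert^2}\dt$ by an It\^{o}--Gronwall argument. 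You place the randomness at the \emph{end}, use the independence of $X_\varepsilon$ from $\BF_{T-\varepsilon}$ to make $\widetilde Y_{T-\varepsilon}$ deterministic (so that the constant extension is the adapted solution on $[0,T-\varepsilon]$ and pasting plus uniqueness identifies $Y_0$), and extract the $\sqrt{\varepsilon}$ factor by Cauchy--Schwarz from the off-the-shelf a priori estimate $\BE{\int_{T-\varepsilon}^T\Vert\widetilde Z_t\Vert^2\dt}\leq C\,\BE{\mu^2}$ with $C$ uniform in $\varepsilon\leq T$. Your route is arguably more elementary: it avoids the PDE entirely (and with it any regularity concerns about the terminal datum $\mu^{-1}(\Phi(\cdot))$, which need not be continuous), at the cost of having to justify that $\widetilde Y_{T-\varepsilon}$ is deterministic -- a point you correctly flag, and which follows from solving the BSDE in the filtration generated by the increments $\{B_t-B_{T-\varepsilon}\}$ together with uniqueness. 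Both arguments use the standing square-integrability $\BE{\mu^2}<\infty$ at the same place.
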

\begin{proof}
By \eqref{bsde1}, we have that
\[\BE[g]{X}=X-\int_{0}^{T}g(t,Z_{t})\dt-\int_{0}^{T}Z'_{t}\dd B_{t}.\]
Hence
\[\BE[g]{X}=\BE{X}-\int_{0}^{T}\BE{g(t,Z_{t})}\dt\geq \BE{X}=\BE{\mu}\]
for any $X\sim\mu$, as $g$ is non-positive. Therefore,
\begin{align*}
\inf_{X\sim \mu }\;\BE[g]{X}\geq \BE{\mu}.
\end{align*}
\par
To show the reverse inequality, we set
\[X^{\alpha}=\mu^{-1}(\Phi(\alpha^{-1/2}B_{\alpha})),\quad \text{ for }0<\alpha\leq T.\]
Then it is easily seen that $X^{\alpha}$ is $\BF_{\alpha}$-measurable following the distribution $\mu$.
If we can show
\begin{align}\label{ineq00}
\lim_{\alpha\to 0+}\BE[g]{X^{\alpha}}=\BE{\mu},
\end{align}
then the desired result follows.
Let $(Y^{\alpha}, Z^{\alpha})$ solve \eqref{bsde1} with the terminal condition $Y^{\alpha}_{T}=X^{\alpha}$. Then
\[\BE[g]{X^{\alpha}}=X^{\alpha}-\int_{0}^{T}g(t,Z^{\alpha}_{t})\dt-\int_{0}^{T}\left(Z^{\alpha}_{t}\right)'\dd B_{t}.\]
By non-linear Feymann-Kac formula (\cite{P92}), we have $\BE[g]{X^{\alpha}}=Y^{\alpha}_{0}=u^{\alpha}(0,0)$, where $u^{\alpha}(t,x)$ is the solution of
\begin{align}\label{pde1}
\begin{cases}
\frac{\partial u^{\alpha}}{\partial t}+\triangle u^{\alpha}+g(t,\nabla u^{\alpha})=0,\\
u^{\alpha}(\alpha,x)=\mu^{-1}(\Phi (\alpha^{-1/2} x)).
\end{cases}
\end{align}
Set $\overline{u}^{\alpha}(t,x)=u^{\alpha}(\alpha t,\alpha^{1/2}x)$, then
\begin{align}\label{pde2}
\begin{cases}
\frac{\partial\overline{u}^{\alpha}}{\partial t}+\triangle\overline{u}^{\alpha}+\alpha^{1/2} g(t,\nabla\overline{u}^{\alpha})=0,\\
\overline{u}^{\alpha}(1,x)=\mu^{-1}(\Phi (x)),
\end{cases}
\end{align}
because $g(t, z)$ is positively homogeneous in $z$.
\par
Denote $\overline{Y}_{t}^{\alpha}=\overline{u}^{\alpha}(t,B_{t})$, which satisfies the BSDE
\[\overline{Y}_{t}^{\alpha} =\overline{Y}_{1}^{\alpha}-\alpha^{1/2}\int_{t}^{1}g(s,\overline{Z}_{s}^{\alpha})\dt-\int_{t}^{1}(\overline{Z}_{s}^{\alpha})'\dd B_{s},
\quad t\in[0,1],\]
for some adapted process $\overline{Z}^{\alpha}$. Consequently,
\[\BE{\overline{Y}_{0}^{\alpha}} =\BE{\overline{Y}_{1}^{\alpha}}-\alpha^{1/2}\int_{0}^{1}\BE{g(s,\overline{Z}_{s}^{\alpha})}\ds.\]
Noting $\overline{Y}_{0}^{\alpha}=\overline{u}^{\alpha}(0,0)=u^{\alpha}(0,0)=Y^{\alpha}_{0}=\BE[g]{X^{\alpha}}$
and $\overline{Y}_{1}^{\alpha}=\overline{u}^{\alpha}(1,B_{1})=\mu^{-1}(\Phi (B_{1}))\sim \mu$, the above equation can be rewritten as
\begin{align} \label{ineq0}
\BE[g]{X^{\alpha}}=\BE{\mu}-\alpha^{1/2}\int_{0}^{1}\BE{g(t,\overline{Z}_{t}^{\alpha})}\dt.
\end{align}
If we can prove that there exists a constant $C$ independent of $\alpha$ ($\leq T$) such that
\begin{align} \label{ineq1}
\int_{0}^{1}\BE{\Vert \overline{Z}_{t}^{\alpha}\Vert^{2}}\dt\leq C,
\end{align}
then applying Cauchy's inequality and using $|g(t,z)|\leq K\Vert z\Vert$ we obtain
\begin{align*}
\bigg|\int_{0}^{1}\BE{g(t,\overline{Z}_{t}^{\alpha})}\dt\bigg|\leq \sqrt{\int_{0}^{1}\BE{(g(t,Z^{\alpha}_{t}))^{2}}\dt}
\leq \sqrt{\int_{0}^{1} K^{2}\BE{\Vert \overline{Z}_{t}^{\alpha}\Vert^{2}}\dt}\leq K\sqrt{C}.
\end{align*}
By virtue of this inequality, sending $\alpha\to 0$ in \eqref{ineq0} then leads to the desired result \eqref{ineq00}.
\par
So it remains to prove \eqref{ineq1}.
Applying It\^{o}'s lemma to $(\overline{Y}^{\alpha}_{t})^{2}$, we have
\[
\dd\;(\overline{Y}^{\alpha}_{t})^{2}=\left(2\alpha^{1/2}\overline{Y}^{\alpha}_{t}g(t,\overline{Z}^{\alpha}_{t})+\Vert \overline{Z}^{\alpha}_{t}\Vert^{2}\right)\dt+2\overline{Y}^{\alpha}_{t}\left(\overline{Z}^{\alpha}_{t}\right)'\dd B_{t},
\]
or
\[
(\overline{Y}^{\alpha}_{1})^{2}=(\overline{Y}^{\alpha}_{t})^{2}+\int_{t}^{1}\left(2\alpha^{1/2}\overline{Y}^{\alpha}_{s}g(s,\overline{Z}^{\alpha}_{s})+\Vert \overline{Z}^{\alpha}_{s}\Vert^{2}\right)\ds+\int_{t}^{1} 2\overline{Y}^{\alpha}_{s}\left(\overline{Z}^{\alpha}_{s}\right)'\dd B_{s}.
\]
Taking expectation on both sides and rearranging,
\begin{align*}
\BE{(\overline{Y}^{\alpha}_{t})^{2}}+\int_{t}^{1}\BE{\Vert \overline{Z}^{\alpha}_{s}\Vert^{2}}\ds &=\BE{(\overline{Y}^{\alpha}_{1})^{2}}-\int_{t}^{1}2\alpha^{1/2}\BE{\overline{Y}^{\alpha}_{s}g(s,\overline{Z}^{\alpha}_{s})}\ds \\
&\leq \BE{\mu^{2}}+2\alpha K^{2}\int_{t}^{1}\BE{(\overline{Y}^{\alpha}_{s})^{2}}\ds+\frac{1}{2K^{2}}\int_{t}^{1}\BE{(g(s,\overline{Z}^{\alpha}_{s}))^{2}}\ds\\
&\leq \BE{\mu^{2}}+2T K^{2}\int_{t}^{1}\BE{(\overline{Y}^{\alpha}_{s})^{2}}\ds+\frac{1}{2}\int_{t}^{1}\BE{\Vert \overline{Z}^{\alpha}_{s}\Vert^{2}}\ds,
\end{align*}
leading to
\begin{align} \label{ineq3}
\BE{(\overline{Y}^{\alpha}_{t})^{2}}+\frac{1}{2}\int_{t}^{1}\BE{\Vert \overline{Z}^{\alpha}_{s}\Vert^{2}}\ds
&\leq \BE{\mu^{2}}+2T K^{2}\int_{t}^{1}\BE{(\overline{Y}^{\alpha}_{s})^{2}}\ds.
\end{align}
Therefore, \begin{align*}
\BE{(\overline{Y}^{\alpha}_{t})^{2}} &\leq \BE{\mu^{2}}+2T K^{2}\int_{t}^{1}\BE{(\overline{Y}^{\alpha}_{s})^{2}}\ds.
\end{align*}
Gronwall's inequality yields
\begin{align*}
\max_{0\leq t\leq 1}\BE{(\overline{Y}^{\alpha}_{t})^{2}} &\leq C_1
\end{align*}
for some constant $C_1$ independent of $\alpha$. This together with \eqref{ineq3} immediately gives the desired inequality \eqref{ineq1} with some constant $C$ independent of $\alpha$.
\end{proof}

\begin{remark}\label{examples}
It follows from \eqref{ineq0} that $\{X^{\alpha}\}$ is a minimizing sequence of the problem \eqref{p1} as $\alpha\rightarrow0$. 
However, the infimum of \eqref{p1} may or may not be attained. Here let us give examples for both situations. As the first example, select $A$ and $C$ 
such that the set $\sets=\{(\omega,t): A_{t}=0\}\cup \{(\omega,t): C_{t}=0\}$ has a positive $\dd\bp\times \dt$ measure. Fix $Z\in L^{2}_{\BF}([0,T])$ that is supported on $\sets$ along with a real constant $a$, and define $ X^*=a+\int_0^Tg(t,Z_{t})\dt+\int_0^TZ'_{t}\dd B_{t}$. Let $\mu$ be the distribution of $X^*$.
Then
\[g(t,Z_{t})= - A'_{t}Z_{t}^{+}- C'_{t}Z_{t}^{-}=0, \quad\mbox{a.s. a.e.},\]
so
\[\BE[g]{X^*}=\BE{X^*}-\int_{0}^{T}\BE{g(t,Z_{t})}\dt=\BE{X^*}=\BE{\mu},\]
which means that $X^*$ is efficient for $\mu$ under $g$ or that the infimum of \eqref{p1} is attained. For the other example, take $A$ and $C$
such that the same set $\sets$ has zero $\dd\bp\times \dt$ measure (namely, $A_{t}$ and $C_{t}$ have only positive components). Let $\mu$ be any measure other than Dirac.
Then for any random variable $X\sim\mu$, we must have that $Z$ is not identical to 0 in \eqref{bsde1}. Hence,
\[\int_{0}^{T}\BE{g(t,Z_{t})}\dt=\int_{0}^{T}\BE{- A'_{t}Z_{t}^{+}- C'_{t}Z_{t}^{-}}\dt<0,\]
yielding
\[\BE[g]{X}=\BE{X}-\int_{0}^{T}\BE{g(t,Z_{t})}\dt>\BE{X}=\BE{\mu}.\]
So the infimum of \eqref{p1} is not achieved and there is no efficient solution.
\end{remark}

\noindent
We now apply the results to a portfolio selection problem. Consider a financial market with one risk-free asset and one risky asset (such as a stock index).
Without loss of generality we assume the risk-free rate $r_t\equiv 0$ (otherwise we consider discounted values in our analysis below) and the volatility rate of the risky asset $\sigma_t\equiv 1$. Assume it is in a bear market now so the expected excess rate of return $\theta_t\leq 0$ for all $t\in[0,T]$. Taking a short position in the risky asset may be tempting, but it incurs a non-negligible transaction fee.
The wealth process in this market replicating a terminal position $X$ is
\begin{align}\label{bsde2}
\begin{cases}
\dd Y_{t}=\left(\theta_{t}\pi_{t}-K_t\pi^{-}_{t}\right)\dt+\pi_{t}\dd B_{t},\\
Y_{T}=X,
\end{cases}
\end{align}
where $K_t$ is the unit transaction cost of shorting at $t$ and we assume that it is large enough so that $K_t\geq |\theta_t|$.
Then $(Y,Z)\equiv (Y,\pi)$ is the solution of the BSDE \eqref{bsde2} with the driver
\begin{align}\label{driver10}
g(t,z)=\theta_{t}z^{+}- (K_{t}+\theta_{t})z^{-}.
\end{align}
Because $\theta_t\leq 0$ and $K_t\geq |\theta_t|$, this driver satisfies \citeassmp{assmp2}.

\par
Consider the following dynamic portfolio choice problem under rank-dependent utility (RDU):
\begin{align}\label{objective02}
\sup_{\pi(\cdot)} \quad & \int_0^{\infty} u(x)\dd\:\big(1-w(1-F_{Y_{T}}(x))\big),\\
\nonumber\textrm{subject to}\quad & (Y, \pi) \text{ follows dynamic of \eqref{bsde2} with initial wealth $Y_0$ },\quad Y_{T}\geqslant 0.
\end{align}
Introduce the following auxiliary static optimization problem in terms of $X$
\begin{align}\label{objective12}
\sup_{X} \quad & \int_0^{\infty} u(x)\dd\:\big(1-w(1-F_{X}(x))\big),\\
\nonumber\textrm{subject to}\quad & \BE{X}=Y_{0},\quad X\geqslant 0.
\end{align}
\par
Taking the quantile $\mu^{-1}$ of $X$ as the new decision variable, 
we can rewrite \eqref{objective12}, via a simple calculation, as
\begin{align}
\sup_{\mu} \quad & \int_0^{1} u(\mu^{-1}(p))w'(1-p)\ddp,\\
\nonumber\textrm{subject to}\quad & \int_0^{1} \mu^{-1}(p) \ddp=Y_{0}, \quad \mu(0-)=0.
\end{align}
This is a quantile optimization problem, whose solution is given by Xia and Zhou \cite{XZ16} and Xu \cite{X16}:

\[(\mu^{*})^{-1}(p)=(u')^{-1}\big(\lambda\phi'(1-p)\big)\]
where $\phi$ is the concave envelope of the function $1-w^{-1}(1-p)$ on $[0,1]$ and $\lambda>0$ is determined by the budget constraint
\[\int_0^{1} (u')^{-1}\big(\lambda\phi'(1-p)\big)\ddp=Y_{0}.\]

By \citeremark{examples}, if
\[ X^*:=Y_0+\int_0^TZ'_{t}\dd B_{t}\sim \mu^{*}\]
for some $Z$ supported on the set
\[\sets=\{(\omega,t): \theta_{t}=0\}\cup \{(\omega,t): K_{t}+\theta_t=0\},
\]
then
\[ \BE[g]{X^*}=\inf_{X\sim \mu }\;\BE[g]{X}=\BE{\mu^*}=\BE{X^*}= \BE[g]{\mu^*} \]
and $X^*$ is efficient. Consequently, optimal portfolio $\pi^{*}$ of the problem \eqref{objective02} can be determined by the solution of the BSDE \eqref{bsde2} with $X=X^{*}$.


\section{Law-invariant $g$-expectation}
\noindent
As mentioned early, given a reachable distribution, there may be many processes reaching it under the same driver.
But the costs of these processes to reach the same distribution, in general, are different; hence the cost efficient problem. However, there exist drivers under which the costs to replicating a given distribution $\mu$ are invariant for all replicating processes. In this case one does not even need to solve the efficient problem, and the $g$-expectation of that distribution is the initial cost of (any) replicating process. A trivial such example is when the driver is $g\equiv 0$, under which $\BE[g]{X}=\BE[0]{X}=\BE{X}=\BE{\mu}$ for any $X\sim\mu$.
This motivates us to propose the following definition.
\begin{defn} [Law-invariant $g$-expectation]
Given a distribution $\mu$, a driver $g$ is called $\mu$-invariant, if $\BE[g]{X}$ are the same for all $X\sim \mu$. 
A $g$-expectation (or its driver $g$) is called law-invariant if it is $\mu$-invariant for any distribution $\mu$.
\end{defn}

\begin{remark}
If a driver $g$ is $\mu$-invariant, then $\BE[g]{\mu}=\BE[g]{X}$ for any $X\sim\mu$.
\end{remark}

\begin{remark}
A driver $g$ is $\mu$-invariant if and only if every admissible solution of \eqref{p1} for $(g,\mu)$ is also an optimal solution.
\end{remark}
\begin{remark}
Every deterministic driver $g$ is $\mu$-invariant if $\mu$ is a Dirac measure, because in such a case there is only one (deterministic) replicating process by virtue of the uniqueness of the solution to the corresponding BSDE.
\end{remark}

In this section we investigate under what conditions a $g$-expectation is law-invariant. Note that throughout the section we will relax the restriction of drivers being Lipschitz continuous by allowing them to have quadratic growth in $z$.

\subsection{A simple case}

\par
The following is a simple but non-trivial example of law-invariant $g$-expectation, which will in turn inspire the subsequent Proposition \ref{lawinvariant1}.

\begin{eg}
The driver $g(t,y,z)\equiv -\frac{1}{2}z^{2}$ is law-invariant. In fact, let $Y$ satisfy the BSDE \eqref{bsde1} with this driver $g$. Then It\^{o}'s formula yields
\begin{align*}
\dd \; e^{Y_{t}}=e^{Y_{t}}Z'_{t}\dd B_{t}.
\end{align*}
Hence $e^{Y_{0}}=\BE{ e^{Y_{T}}}=\BE{ e^{\mu}}$ if $Y_{T}\sim\mu$, implying $\BE[g]{X}=\log \big(\BE{ e^{\mu}}\big)$ for any $X\sim\mu$, which is $\mu$-invariant.
\end{eg}
\begin{prop}\label{lawinvariant1}
Any time-invariant driver of the form $g(t,y,z)\equiv -f(y)\Vert z\Vert^{2}$, 
is law-invariant and
\begin{align*}
\BE[g]{\mu}=\varphi^{-1}\Big(\BE{\varphi(\mu)}\Big),
\end{align*}
for any distribution $\mu$ 
where
\[\varphi(x)=\int_{0}^{x}\exp\bigg(2\int_{0}^{y}f(s)\ds\bigg)\dy.\]
\end{prop}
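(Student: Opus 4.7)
The plan is to mimic the idea already exhibited in the preceding example, namely to find an increasing function $\varphi$ such that $\varphi(Y_t)$ becomes a local martingale under the BSDE dynamics. Once this is achieved, taking expectations collapses the initial value $Y_0=\BE[g]{X}$ to a functional depending only on the law of $Y_T$, establishing both law-invariance and the stated formula.

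More concretely, let $(Y,Z)$ solve the BSDE \eqref{bsde1} with driver $g(t,y,z)=-f(y)\Vert z\Vert^2$ and terminal value $X\sim\mu$. I would apply It\^o's formula to $\varphi(Y_t)$ for a sufficiently smooth $\varphi$:
\[
\dd\varphi(Y_t)=\varphi'(Y_t)\dd Y_t+\tfrac12\varphi''(Y_t)\Vert Z_t\Vert^2\dt
=\Big[\tfrac12\varphi''(Y_t)-f(Y_t)\varphi'(Y_t)\Big]\Vert Z_t\Vert^2\dt+\varphi'(Y_t)Z_t'\dd B_t.
\]
Requiring the drift to vanish leads to the ODE $\varphi''(y)=2f(y)\varphi'(y)$, whose solution (normalized by $\varphi(0)=0$ and $\varphi'(0)=1$) is exactly
\[
\varphi(x)=\int_{0}^{x}\exp\Big(2\int_{0}^{y}f(s)\ds\Big)\dy.
\]
With this choice, $\varphi(Y_t)$ becomes a local martingale driven by the integrand $\varphi'(Y_t)Z_t$.

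Next I would upgrade the local martingale to a true martingale under the standing integrability assumption (the final sentence of Section~2 stipulates that all integrals involved exist, and $\mu$ is square-integrable), so that
\[
\varphi(Y_0)=\BE{\varphi(Y_T)}=\BE{\varphi(X)}=\BE{\varphi(\mu)}.
\]
Since $\varphi'=\exp(2\int_0^\cdot f(s)\ds)>0$, the function $\varphi$ is strictly increasing, hence invertible, and we obtain
\[
\BE[g]{X}=Y_0=\varphi^{-1}\big(\BE{\varphi(\mu)}\big).
\]
The right-hand side depends on $X$ only through its distribution $\mu$; therefore the driver is $\mu$-invariant for every admissible $\mu$, i.e., it is law-invariant, and the displayed formula for $\BE[g]{\mu}$ follows.

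The routine part is the It\^o computation together with verifying that $\varphi$ solves the required ODE. The only genuine technicality is confirming that $\varphi(Y_t)$ is a true martingale rather than merely a local one; this is the step where one must appeal to the regularity of $f$ and to the integrability of $\varphi(X)$ and of $\int_0^T\varphi'(Y_t)^2\Vert Z_t\Vert^2\dt$, consistent with the paper's blanket assumption that all relevant integrals exist. No other step should pose difficulty.
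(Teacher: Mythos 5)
Your proposal is correct and follows essentially the same route as the paper: derive the ODE $\varphi''=2f\varphi'$ so that It\^o's formula kills the drift, conclude $\varphi(Y_0)=\BE{\varphi(\mu)}$, and invert the strictly increasing $\varphi$. Your extra care about upgrading the local martingale to a true martingale is a point the paper glosses over under its blanket integrability assumption, but it does not change the argument.
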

\begin{proof}
Let $Y$ follow the BSDE \eqref{bsde1} with $g(t,y,z)\equiv -f(y)\Vert z\Vert^{2}$. Note that $\varphi$ satisfies the following ordinarily differential equation (ODE):
\[2f(x)\varphi_{x}(x)-\varphi_{xx}(x)=0.\]
Hence, It\^{o}'s formula yields
\begin{align*}
\dd \;\varphi(Y_{t})= \varphi_{x}(Y_{t})Z'_{t}\dd B_{t}.
\end{align*}
This leads to $\varphi(Y_{0})=\BE{\varphi(Y_{T})}=\BE{\varphi(\mu)}$ if $Y_{T}\sim\mu$, implying that the driver $g$ is $\mu$-invariant because $\varphi$ is strictly increasing.
\end{proof}
\begin{remark}
One may wonder if any time-invariant and law-invariant driver must be of the form in Proposition \ref{lawinvariant1}. Unfortunately this is not true.
For instance, take $g(t,y,z)\equiv ry$ where $r$ is a fixed constant.
In this case $e^{-rt}Y_{t}$ is a martingale; so $\BE[g]{X}=e^{-rT}\BE{X}=e^{-rT}\BE{\mu}$ for any $X\sim\mu$ suggesting that $g$ is law-invariant.
In the same spirit, one can also show that $g(t,y,z)\equiv r_{t}y$ is law-invariant, for any determinist function $r_{t}$. This shows that time-variant drivers can also be law-invariant.
\end{remark}
\par
In the next subsection, we will give a large class of time-variant and law-invariant drivers.

\subsection{A class of law-invariant drivers}
\noindent
In this section, we look for sufficient conditions to ensure a driver to be law-invariant. Identifying necessary conditions is a more challenging and open problem.
\par
Specifically, we study under what conditions deterministic divers of the form (called $z$-separable drivers) \[g(t,y,z)\equiv f(t,y)k(z)+h(t,y)\] are law-invariant. \par
Our idea is as follows. If we find a function $\varphi$ such that $\varphi(t,Y_{t})$ is a martingale, then
\[\varphi(0,Y_{0})=\BE{\varphi(T,Y_{T})}=\BE{\varphi(T,\mu)}\] for any $Y_{T}\sim\mu$.
If, in addition, the function $y\mapsto \varphi(0,y)$ is strictly increasing, then $Y_{0}$ is uniquely determined by the above equation and, hence, the driver $g$ is law-invariant. So our target becomes to find conditions on $ g$ that guarantees the existence of such a function $\varphi$.
\par
In what follows, we first intuitively derive some conditions, and then prove that those conditions are indeed sufficient.
\par
Suppose $(Y,Z)$ follows the BSDE \eqref{bsde1}. Then It\^{o}'s lemma yields
\begin{align}\label{itoequation}
\dd \varphi(t,Y_{t})= \left[\varphi_{t}(t,Y_{t})+\varphi_{y}(t,Y_{t})g(t,Y_{t},Z_{t})+\tfrac{1}{2}\varphi_{yy}(t,Y_{t})\Vert Z_{t}\Vert^{2}\right]\dt+\varphi_{y}(t,Y_{t})Z'_{t}\dd B_{t}.
\end{align}
Hence,  $\varphi(t,Y_{t})$ is a martingale if 
\[\varphi_{t}(t,y)+\varphi_{y}(t,y)h(t,y)\equiv0,\quad \varphi_{y}(t,y)f(t,y) k(z)+\tfrac{1}{2}\varphi_{yy}(t,y)\Vert z\Vert^{2}\equiv 0,\]
along with some integrability condition on $\varphi_{y}(t,Y_{t})Z'_{t}$. Ignore the integrability condition for now in our intuitive argument here and focus on the two equations above. 
Intuitively, the second equation suggests $k(z)\equiv \Vert z\Vert^{2}$ (up to a multiplier which can be absorbed by $f$). Hence
\begin{align}\label{eqlt0}
\varphi_{t}(t,y)+\varphi_{y}(t,y)h(t,y)\equiv0,\quad \varphi_{y}(t,y)f(t,y) +\tfrac{1}{2}\varphi_{yy}(t,y)\equiv 0,
\end{align}
which imply
\begin{align}
\varphi_{ty}(t,y)&=-\varphi_{yy}(t,y)h(t,y)-\varphi_{y}(t,y)h_{y}(t,y) =\varphi_{y}(t,y)(2f(t,y)h(t,y)-h_{y}(t,y)),\label{eqlt1}\\
\label{eqlt2} \varphi_{y}(t,y)&= \varphi_{y}(t,0)e^{-2\int_{0}^{y}f(t,x)\dx}.
\end{align}
Differentiating (\ref{eqlt2}) in $t$ gives
\begin{align}\label{eqlt3}
\varphi_{ty}(t,y)=\left (\varphi_{ty}(t,0)-2\varphi_{y}(t,0)\int_{0}^{y}f_{t}(t,x)\dx\right)e^{-2\int_{0}^{y}f(t,x)\dx}.
\end{align}
By comparing \eqref{eqlt1} and \eqref{eqlt3}, we get
\begin{multline}\label{eqlt4}
\qquad\varphi_{y}(t,y)(2f(t,y)h(t,y)-h_{y}(t,y))\\
=\left (\varphi_{ty}(t,0)-2\varphi_{y}(t,0)\int_{0}^{y}f_{t}(t,x)\dx\right)e^{-2\int_{0}^{y}f(t,x)\dx}.\qquad
\end{multline}
When $y=0$, it becomes
\begin{align}\label{eqlt7}
\varphi_{y}(t,0)(2f(t,0)h(t,0)-h_{y}(t,0))= \varphi_{ty}(t,0).
\end{align}
Substituting \eqref{eqlt7} into \eqref{eqlt4} gives
\begin{multline}\label{eqlt8}
\varphi_{y}(t,y)(2f(t,y)h(t,y)-h_{y}(t,y)) \\
= \varphi_{y}(t,0)\left(2f(t,0)h(t,0)-h_{y}(t,0)-2\int_{0}^{y}f_{t}(t,x)\dx\right)e^{-2\int_{0}^{y}f(t,x)\dx}.
\end{multline}
Using \eqref{eqlt2}, we deduce from the above that
\begin{align}\label{eqlt6}
2f(t,y)h(t,y)-h_{y}(t,y)= 2f(t,0)h(t,0)-h_{y}(t,0)-2\int_{0}^{y}f_{t}(t,x)\dx.
\end{align}
Taking derivative in $y$ yields
\begin{align}\label{eqlt5}
2f_{y}(t,y)h(t,y)+ 2f(t,y)h_{y}(t,y)-h_{yy}(t,y)= -2f_{t}(t,y).
\end{align}

\begin{remark}
One can see that $f$ must be time-invariant if $h\equiv 0$ in \eqref{eqlt5}. This case has been studied in Proposition \ref{lawinvariant1}. In general, we can always find a time-invariant $f$ satisfying \eqref{eqlt5} if $h$ is time-invariant.
\end{remark}

The following theorem formalizes the above intuitive PDE argument and stipulates that \eqref{eqlt5} is sufficient for $g(t,y,z)\equiv f(t,y) \Vert z\Vert^{2}+h(t,y)$ to be law-invariant.

\begin{thm}\label{THMlawin}
Suppose that  $f\in C^{1,1}([0,T]\times\R)$ and $h\in C^{0,2}([0,T]\times\R)$ satisfy \eqref{eqlt5}
and
\[\sup\{|f(t,y)|:(t,y)\in [0,T]\times\R\}< \alpha,\quad \sup\{|h(t,y)|:(t,y)\in [0,T]\times\R\}\leq \beta.\]
 Then the driver $g(t,y,z)\equiv f(t,y)\Vert z\Vert^{2}+h(t,y)$ is law-invariant among those distributions $\mu$ satisfying
\[\BE{e^{2\alpha e^{\beta T}|\mu|}}<\infty,\]
in which case the $g$-expectation $\BE[g]{\mu}$ is uniquely determined by \[\varphi(0,\BE[g]{\mu})=\BE{\varphi(T,\mu)}\] where
\begin{multline}
\qquad\qquad\varphi(t,y)=e^{\int_{0}^{t}[2f(s,0)h(s,0)-h_{y}(s,0)]\ds}\int_{0}^{y}e^{-2\int_{0}^{z}f(t,x)\dx}\dz\\
-\int_{0}^{t}e^{\int_{0}^{u}[2f(s,0)h(s,0)-h_{y}(s,0)]\ds}h(u,0)\du\qquad\qquad\label{defvarphi}
\end{multline}
is strictly increasing in $y$ for each fixed $t$.
\end{thm}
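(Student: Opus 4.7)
My plan is to rigorously execute the heuristic PDE derivation that precedes the theorem, with the main work lying in verifying integrability so that the stochastic integral in the application of It\^{o}'s formula is a genuine martingale rather than just a local one. The $\|z\|^2$ term makes the BSDE \eqref{bsde1} with this driver a \emph{quadratic} BSDE, which is why the exponential moment condition on $\mu$ enters, and why quadratic BSDE theory (in the spirit of Kobylanski) is needed to obtain a priori estimates on $(Y,Z)$.

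First I would verify that $\varphi$ defined in \eqref{defvarphi} satisfies the two conditions in \eqref{eqlt0}. From \eqref{defvarphi} one computes directly that
\[
\varphi_y(t,y)=e^{\int_0^t[2f(s,0)h(s,0)-h_y(s,0)]\ds}\,e^{-2\int_0^y f(t,x)\dx},
\]
which is strictly positive, giving strict monotonicity of $\varphi(t,\cdot)$ on the nose. Differentiating again in $y$ yields $\varphi_{yy}=-2f(t,y)\varphi_y$, which is the second PDE in \eqref{eqlt0}. For the first, differentiating $\varphi$ in $t$ and using \eqref{eqlt5} (integrated in $y$ from $0$ to the upper variable, which reproduces \eqref{eqlt6}) forces $\varphi_t+\varphi_y h\equiv 0$; this is exactly the content of the chain \eqref{eqlt1}--\eqref{eqlt6} run backwards, and the hypothesis \eqref{eqlt5} is precisely the compatibility condition needed.

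Next I would apply It\^{o}'s formula, as in \eqref{itoequation}, to $\varphi(t,Y_t)$ where $(Y,Z)$ solves \eqref{bsde1}. With the two PDE identities above, the $\dt$-drift vanishes identically, leaving
\[
\varphi(T,Y_T)=\varphi(0,Y_0)+\int_0^T\varphi_y(t,Y_t)\,Z_t'\,\dd B_t.
\]
If the stochastic integral is a true martingale, taking expectations gives $\varphi(0,Y_0)=\BE{\varphi(T,Y_T)}=\BE{\varphi(T,\mu)}$, and since $\varphi(0,\cdot)$ is strictly increasing, $Y_0=\BE[g]{X}$ depends only on the distribution $\mu$, which is law-invariance.

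The main obstacle is the third step: proving the stochastic integral is a martingale. Here one must exploit both hypotheses. Because $|h|\leq\beta$, a standard comparison against $\pm\int_t^T h(s,Y_s)\ds$ (or equivalently a Gronwall argument on $\BE{|Y_t|\mid\BF_t}$ using the bounded drift and the fact that the $f\|Z\|^2$ part has a definite sign after an exponential change of variable) yields an essentially exponential-in-$(T-t)$ a priori bound of the form $|Y_t|\leq e^{\beta(T-t)}|X|+\text{(bounded)}$, at least after a Girsanov-type reduction. Combined with $|f|<\alpha$, this gives $|\varphi_y(t,Y_t)|\leq Ce^{2\alpha|Y_t|}\leq Ce^{2\alpha e^{\beta T}|X|}$, which is integrable by the hypothesis $\BE{e^{2\alpha e^{\beta T}|\mu|}}<\infty$. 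Meanwhile, for quadratic BSDEs with bounded-in-$y$ drift coefficients, standard theory ensures that $Z$ lies in the BMO space, so $\int_0^T\|Z_t\|^2\dt$ has exponential moments of any order. Combining these via Cauchy--Schwarz (or the John--Nirenberg/BMO estimate) yields $\BE{\int_0^T|\varphi_y(t,Y_t)|^2\|Z_t\|^2\dt}<\infty$, which upgrades the local martingale to a true martingale and completes the proof. The delicate point is keeping the constant in the a priori bound on $|Y_t|$ sharp enough that the resulting $2\alpha e^{\beta T}$ matches the assumed integrability exponent on $\mu$; a localization argument using stopping times $\tau_n=\inf\{t:|Y_t|\geq n\}\wedge T$ followed by dominated convergence gives a clean route around any remaining technicalities.
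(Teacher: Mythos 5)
Your proposal is correct and follows essentially the same architecture as the paper's proof: verify that $\varphi$ from \eqref{defvarphi} solves the two identities in \eqref{eqlt0} (with the first obtained by running the chain \eqref{eqlt5}$\Rightarrow$\eqref{eqlt6}$\Rightarrow$\eqref{eqlt8}$\Rightarrow$\eqref{eqlt4} backwards and anchoring at $y=0$), apply It\^{o}'s formula so that the drift vanishes, and conclude $\varphi(0,Y_0)=\BE{\varphi(T,\mu)}$ from the martingale property together with strict monotonicity of $\varphi(0,\cdot)$. The one place you genuinely diverge is the martingale-verification step. The paper invokes Briand--Hu \cite[Theorem 2]{BH06} to get a solution with $Z\in L^{2}_{\BF}([0,T])$ (this is where the exponential-moment condition $\BE{e^{2\alpha e^{\beta T}|\mu|}}<\infty$ enters, matching the $\gamma e^{\beta T}$ exponent in their existence theorem with $\gamma=2\alpha$), and then simply asserts from \eqref{eqlt21} that $\varphi_{y}$ is uniformly bounded so the stochastic integral is a true martingale. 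Your treatment is more careful here: since $\varphi_y(t,y)\sim e^{2\alpha|y|}$ is \emph{not} uniformly bounded under the stated hypothesis $|f|<\alpha$ alone, some argument combining an a priori exponential bound on $|Y_t|$ in terms of conditional exponential moments of $|X|$, the BMO property of $Z$ for quadratic BSDEs, and a localization/dominated-convergence step is indeed what is needed to close the integrability gap, and your sketch supplies exactly that. The only imprecision is the form of your a priori bound ($|Y_t|\leq e^{\beta(T-t)}|X|+\text{bounded}$ should really be a bound on $e^{2\alpha|Y_t|}$ by a conditional expectation of $e^{2\alpha e^{\beta(T-t)}|X|}$ plus constants, \`a la Briand--Hu), but this does not affect the conclusion.
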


\begin{proof}
By the discussion at the beginning of this section, it suffices to show, given the condition \eqref{eqlt5}, that the function $\varphi$ defined by \eqref{defvarphi} satisfies \eqref{eqlt0}, that $ \varphi(t,Y_{t})$ is a martingale,  and that $y\mapsto \varphi(0,y)$ is strictly increasing.
The last statement clearly follows from the fact that
\begin{align*}
\varphi(0,y)= \int_{0}^{y}e^{-2\int_{0}^{z}f(0,x)\dx}\dz.
\end{align*}
\par
Differentiating \eqref{defvarphi} in $y$ gives
\begin{align}\label{eqlt21}
\varphi_{y}(t,y)=e^{\int_{0}^{t}[2f(s,0)h(s,0)-h_{y}(s,0)]\ds}e^{-2\int_{0}^{y}f(t,x)\dx};
\end{align}
and hence
\[\varphi_{y}(t,y)=\varphi_{y}(t,0)e^{-2\int_{0}^{y}f(t,x)\dx},\]
which is \eqref{eqlt2}.
The equation \eqref{eqlt21} gives \eqref{eqlt3} via differentiating in $t$. From \eqref{eqlt21}, we also have \eqref{eqlt7} as well as the second equation in \eqref{eqlt0} by simple calculus. We now show the first equation in \eqref{eqlt0}.
\par
To this end, we rewrite $\varphi$ as
\begin{align*}
\varphi(t,y)&=\varphi_{y}(t,0)\int_{0}^{y}e^{-2\int_{0}^{z}f(t,x)\dx}\dz -\int_{0}^{t}\varphi_{y}(u,0)h(u,0)\du,
\end{align*}
which, by setting $y=0$ and then differentiation in $t$, yields
\begin{align}\label{eqlt22}
\varphi_{t}(t,0)+\varphi_{y}(t,0)h(t,0)=0.
\end{align}
\par
Notice \eqref{eqlt5} is equivalent to \eqref{eqlt6}, while the latter together with \eqref{eqlt2} implies \eqref{eqlt8}.
One can now easily deduce \eqref{eqlt4} from \eqref{eqlt8} and \eqref{eqlt7}. Comparing \eqref{eqlt4} and \eqref{eqlt3}, we see $\varphi_{ty}=\varphi_{y}(2fh-h_{y})$ which, together with the proved second equation in \eqref{eqlt0}, yields $\varphi_{ty}=-\varphi_{yy}h-\varphi_{y}h_{y}$. This implies that $\varphi_{t}+\varphi_{y}h$ does not depend on $y$, i.e.,
\[\varphi_{t}(t,y)+\varphi_{y}(t,y)h(t,y)\equiv\varphi_{t}(t,0)+\varphi_{y}(t,0)h(t,0).\]
But the right hand side is 0 by \eqref{eqlt22}, so is the left hand side, proving the first equation in \eqref{eqlt0}.
Finally, by \cite[Theorem 2]{BH06}, the BSDE \eqref{bsde1} admits a solution $(Y,Z)$ with
 $Z\in L^{2}_{\BF}([0,T])$.
It then follows from \eqref{itoequation} that
\[\dd \varphi(t,Y_{t})=\varphi_{y}(t,Y_{t})Z'_{t}\dd B_{t}.\]
It is easily seen from \eqref{eqlt21} that $\varphi_{y}$ is uniformly bounded;
so $ \varphi(t,Y_{t})$ is a martingale. The claim follows.
\end{proof}

\begin{remark}
Given any $h\in C^{0,2}([0,T]\times (0,\infty))$ with essentially bounded derivatives up to the second order, there exist (possibly multiple) $f\in C^{1,1}([0,T]\times\R)$ satisfying \eqref{eqlt5} by solving the resulting ODE. So
we have infinitely many ($z$-separable) law-invariant drivers.
\end{remark}

\subsection{An application to portfolio choice}
\noindent
Suppose the (self-financing) wealth process $Y$ of a small investor in a market consisting of a risk-free asset and multiple risky assets follows the SDE:
\begin{align} \label{newwealth1}
\dd Y_{t}=g(t,Y_{t},\sigma'_{t}\pi_{t})\dt+\pi'_{t}\sigma_{t}\dd B_{t},
\end{align}
with an initial wealth $Y_{0}>0$, where the vector $\pi_{t}$ denotes the dollar amounts invested in the risky assets at time $t$. Here we assume $\sigma_{t}^{-1}$ exists and is essentially bounded, in which case there is a BSDE with the driver $g$ associated with \eqref{newwealth1}.
\par
Consider the following dynamic portfolio choice problem under RDU:
\begin{align}\label{objective0}
\sup_{\pi(\cdot)} \quad & \int_0^{\infty} u(x)\dd\:\big(1-w(1-F_{Y_{T}}(x))\big),\\
\nonumber\textrm{subject to}\quad & (Y, \pi) \text{ follows \eqref{newwealth1}},\quad Y_{T}\geqslant 0.
\end{align}
As before, we first consider a static optimization problem in terms of $g$-expectation
\begin{align}\label{objective1}
\sup_{X} \quad & \int_0^{\infty} u(x)\dd\:\big(1-w(1-F_{X}(x))\big),\\
\nonumber\textrm{subject to}\quad & \BE[g]{X}=Y_{0},\quad X\geqslant 0.
\end{align}
If $X^{*}$ is an optimal solution of this problem, then the optimal portfolio $\pi^{*}$ of the problem \eqref{objective0} can be determined by the solution of the BSDE consisting of the SDE \eqref{newwealth1} and the terminal $X^{*}$.
\par
Taking the quantile $\mu^{-1}$ as the new decision variable, the problem \eqref{objective1} can be rewritten as
\begin{align}\label{objective2}
\sup_{X,\;\mu} \quad & \int_0^{1} u(\mu^{-1}(p))w'(1-p)\ddp,\\
\nonumber\textrm{subject to}\quad & \BE[g]{X}=Y_{0},\quad X\sim\mu, \quad \mu(0-)=0.
\end{align}
With the driver $g$ given in \citethm{THMlawin}, the above quantile optimization problem can be solved, leading to a complete solution to the problem \eqref{objective1}.
\begin{thm}
Suppose the driver in \eqref{newwealth1} is $g(t,y,z)\equiv f(t,y)\Vert z\Vert^{2}+h(t,y)$ with some functions $f\in C^{1,1}([0,T]\times\R)$ and $h\in C^{0,2}([0,T]\times\R)$ satisfying \eqref{eqlt5} and $f(T,\cdot)\leq0$. Furthermore we assume the probability distortion $w$ is concave and strictly increasing. Then the optimal solution of \eqref{objective1} is any random variable following the distribution $\mu^{*}$, where
$\mu^{*}\equiv \mu^{*}(\lambda)$ is uniquely determined by
\[ u'(x)w'(1-\mu^{*}(x))=\lambda \varphi_{y}(T,x),\]
with $\varphi$ defined by \eqref{defvarphi} and
the Lagrange multiplier $\lambda>0$ determined by $\BE{\varphi(T,\mu^{*}(\lambda))}=\varphi(0,Y_{0})$.
\end{thm}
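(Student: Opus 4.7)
The plan is to reduce this portfolio optimization problem to a quantile optimization problem via Theorem \ref{THMlawin}, and then to solve it by the Lagrange method, verifying carefully that the point-wise first-order condition actually yields the global optimum under the stated hypotheses $f(T,\cdot)\le 0$ and concavity of $w$.

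First, I would invoke Theorem \ref{THMlawin}: since $f$ and $h$ satisfy \eqref{eqlt5}, the driver $g$ is law-invariant, and $\mathbb{E}^g[X] = \varphi(0,\cdot)^{-1}\bigl(\mathbb{E}[\varphi(T,X)]\bigr)$ where $\varphi$ is the strictly increasing function given by \eqref{defvarphi}. Because $\varphi(0,\cdot)$ is strictly increasing, the budget constraint $\mathbb{E}^g[X]=Y_0$ in \eqref{objective1} is equivalent to $\mathbb{E}[\varphi(T,X)]=\varphi(0,Y_0)$. Also, since both the RDU objective and the reformulated constraint depend on $X$ only through its distribution $\mu$, we may pass to the quantile $\mu^{-1}$ and rewrite \eqref{objective2} as
\begin{align*}
\sup_{\mu}\;\int_0^1 u(\mu^{-1}(p))w'(1-p)\,\mathrm{d}p\quad\text{s.t.}\quad\int_0^1\varphi(T,\mu^{-1}(p))\,\mathrm{d}p=\varphi(0,Y_0),\;\mu(0-)=0.
\end{align*}

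Next I would form the Lagrangian $L(\mu^{-1},\lambda)=\int_0^1\bigl[u(\mu^{-1}(p))w'(1-p)-\lambda\,\varphi(T,\mu^{-1}(p))\bigr]\mathrm{d}p+\lambda\varphi(0,Y_0)$ and maximize the integrand point-wise in $x=\mu^{-1}(p)\ge 0$. The first-order condition reads $u'(x)w'(1-p)=\lambda\,\varphi_y(T,x)$, which upon setting $p=\mu^*(x)$ gives exactly the relation in the theorem statement. To confirm this is a maximum, I would differentiate the integrand twice in $x$ to obtain $u''(x)w'(1-p)-\lambda\varphi_{yy}(T,x)$; from \eqref{eqlt21}, $\varphi_{yy}(T,x)=-2f(T,x)\varphi_y(T,x)\ge 0$ by the assumption $f(T,\cdot)\le 0$, while $u''\le 0$ and $w'\ge 0$, so the integrand is concave in $x$ for each $p$ and the point-wise KKT point is the global maximizer of the Lagrangian.

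Then I must check that the resulting candidate $x^*(p)$ is a genuine quantile, i.e., non-decreasing in $p$. Setting $\Psi(x):=\varphi_y(T,x)/u'(x)$, the first-order condition becomes $\Psi(x^*(p))=w'(1-p)/\lambda$; since $\varphi_y(T,\cdot)$ is non-decreasing (again by $f(T,\cdot)\le 0$) and $1/u'$ is non-decreasing ($u$ concave), $\Psi$ is non-decreasing, while the concavity of $w$ makes $p\mapsto w'(1-p)$ non-decreasing, so $x^*(p)$ is non-decreasing in $p$ as required. The constraint $X^*\ge 0$ is enforced by taking the positive branch in solving $u'(x)w'(1-\mu^*(x))=\lambda\varphi_y(T,x)$, consistent with $\mu^*(0-)=0$.

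Finally, I would argue that the Lagrange multiplier $\lambda>0$ is uniquely determined by the budget constraint $\mathbb{E}[\varphi(T,\mu^*(\lambda))]=\varphi(0,Y_0)$: the left-hand side is a continuous, strictly monotonic function of $\lambda$ (which one can see from the implicit definition of $\mu^*$ via $\Psi$), and its range covers $\varphi(0,Y_0)$ under mild integrability. The main obstacle here is really the concavity verification in step two — everything hinges on $\varphi_{yy}(T,\cdot)\ge 0$, which is precisely why the extra terminal condition $f(T,\cdot)\le 0$ is imposed beyond the law-invariance requirement \eqref{eqlt5}. With that in hand, any random variable distributed as $\mu^*$ saturates the Lagrangian upper bound on the original objective and so is optimal for \eqref{objective1}.
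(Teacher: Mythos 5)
Your proposal is correct and follows essentially the same route as the paper's own proof: invoke Theorem \ref{THMlawin} to convert the $g$-expectation budget constraint into $\BE{\varphi(T,\mu)}=\varphi(0,Y_{0})$, pass to the quantile formulation, solve the Lagrangian point-wise using the concavity of $x\mapsto u(x)w'(1-p)-\lambda\varphi(T,x)$ (which rests on $\varphi_{yy}(T,\cdot)=-2f(T,\cdot)\varphi_{y}(T,\cdot)\geq 0$), and verify monotonicity of $x^{*}(p)$ via the concavity of $w$. Your monotonicity check through the ratio $\varphi_{y}(T,x)/u'(x)$ is a cosmetic variant of the paper's implicit differentiation of the first-order condition; the substance is identical.
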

\begin{proof}
We use the same notations used in \citethm{THMlawin} and its proof.
It follows from \citethm{THMlawin} that $g$ is a law-invariant driver. Moreover, $\varphi(0,\BE[g]{X})=\BE{\varphi(T,\mu)}$ for any $X\sim\mu$. Hence the problem \eqref{objective2} reduces to
\begin{align}\label{objective3}
\sup_{\mu} \quad & \int_0^{1} u(\mu^{-1}(p))w'(1-p)\ddp,\\
\nonumber\textrm{subject to}\quad & \BE{\varphi(T,\mu)}=\varphi(0,Y_{0}), \quad \mu(0-)=0.
\end{align}
Noting
\begin{align*}
\BE{\varphi(T,\mu)}=\int_{0}^{\infty}\varphi(T,x)\dd\mu(x)=\int_{0}^{1}\varphi(T,\mu^{-1}(p))\ddp,
\end{align*}
we can rewrite \eqref{objective3} in terms of $\mu^{-1}$:
\begin{align}\label{objective4}
\sup_{\mu^{-1}} \quad & \int_0^{1} u(\mu^{-1}(p))w'(1-p)\ddp,\\
\nonumber\textrm{subject to}\quad & \int_{0}^{1}\varphi(T,\mu^{-1}(p))\ddp=\varphi(0,Y_{0}), \quad \mu^{-1}(0+)\geq 0.
\end{align}
Because $\varphi_{y}(T,\cdot)\geq 0$ and $f(T,\cdot)\leq0$, we get from \eqref{eqlt0} that
\[\varphi_{yy}(T,y)=-2 \varphi_{y}(T,y)f(T,y)\geq 0,\]
implying that the mapping $y\mapsto\varphi(T,y)$ is convex. Since $\varphi_{y}(T,\cdot)\geq 0$,
the problem \eqref{objective4} is equivalent to
\begin{align}\label{objective5}
\sup_{\mu^{-1}} \quad & \int_0^{1} u(\mu^{-1}(p))w'(1-p)\ddp-\lambda \int_{0}^{1}\varphi(T,\mu^{-1}(p))\ddp,\\
\nonumber\textrm{subject to}\quad & \mu^{-1}(0+)\geq 0,
\end{align}
for some Lagrange multiplier $\lambda>0$.
\par
Because $y\mapsto\lambda\varphi(T,y)$ is convex and $u$ is strictly concave, the mapping
\[x\mapsto u(x)w'(1-p)-\lambda \varphi(T,x)\]
is strictly concave for each fixed $p\in(0,1)$. So its maximizer $x^{*}=x^{*}(p)$ is uniquely determined by the first-order condition
\[ u'(x^{*})w'(1-p)=\lambda \varphi_{y}(T,x^{*}).\]
Differentiating it in $p$ gives
\[ \big(u''(x^{*})w'(1-p)-\lambda \varphi_{yy}(T,x^{*})\big)\frac{\dd x^{*}}{\dd p}=u'(x^{*})w''(1-p),\]
from which it follows that $x^{*}(p)$ is increasing in $p$ because $w$ is concave. Let $(\mu^{*})^{-1}(p)\equiv x^{*}(p)$. Then it is a quantile function that maximizes the integrand of \eqref{objective5} point-wisely; so it is the optimal one. The proof concludes by noting
$u'(x)w'(1-\mu^{*}(x))=u'(x^{*})w'(1-p)=\lambda \varphi_{y}(T,x).$
\end{proof}

\begin{remark}
In the above result, for simplicity we have assumed that $w$ is concave. However, more general cases (e.g. when $w$ is inverse S-shaped) can be solved by the quantile optimization method developed recently.
We refer interested readers to \cite{HZ11,XZ13,HX16,XZ16,X16,XZZ16} for details.
\end{remark}

\section{Conclusions}

In this paper, we define the $g$-expectation of a distribution as the {\it infimum} of the $g$-expectations of the associated terminal random variables and discuss its interpretations in the context of portfolio choice. We could also change ``{\it infimum}'' to ``{\it supremum}'' in the definition to define a counterpart of this $g$-expectation.
A potential application of this alternative notion is in robust finance, in which one needs to set aside a sufficient initial fund to replicate (or hedge against) a distribution. We believe that a theory on this supremum version of the $g$-expectation of distributions can be developed in parallel to that of the current, infimum version.

We end by noting that this paper raises more questions than giving solutions. The problem of BSDEs and the related $g$-expectations with distributions is a new research direction, and this paper gives very limited, preliminary results.
There are many interesting problems to be explored, such as the uniqueness in law of BSDEs with terminal distributions, a complete characterization of law-invariant $g$-expectation, and axiomatic characterization of $g$-expectation of distributions for both the infimum and supremum versions.



\end{document}